\documentclass[11pt,letterpaper]{amsart}

\usepackage[T1]{fontenc}
\usepackage[utf8]{inputenc}
\usepackage{lmodern}
\usepackage{microtype}
\usepackage{amsmath,amsthm,amssymb,mathtools}
\usepackage[hidelinks]{hyperref}
\usepackage{enumitem}

\numberwithin{equation}{section}

\theoremstyle{plain}
\newtheorem{theorem}{Theorem}
\newtheorem{lemma}[theorem]{Lemma}
\newtheorem{proposition}[theorem]{Proposition}
\newtheorem{corollary}[theorem]{Corollary}

\theoremstyle{remark}
\newtheorem{remark}[theorem]{Remark}

\newcommand{\doi}[1]{\href{https://doi.org/#1}{doi:#1}}

\newcommand{\Z}{\mathbb{Z}}
\newcommand{\Q}{\mathbb{Q}}
\newcommand{\R}{\mathbb{R}}

\title[Legendre compressions and integrality]{Legendre compressions and an integrality conjecture for the H\"ormander--Bernhardsson extremal function}
\author{Khai-Hoan Nguyen-Dang}
\address{Morningside Center of Mathematics, Chinese Academy of Sciences, No.\ 55, Zhongguancun East Road, Beijing 100190, China}
\email{khaihoann@gmail.com}

\subjclass[2020]{Primary 30D15; Secondary 33C45, 39A06, 42A38}
\keywords{H\"ormander--Bernhardsson extremal function, Legendre polynomials, tridiagonal determinants}
\date{}

\begin{document}

\begin{abstract}
We prove Conjecture~2 of Bondarenko, Ortega-Cerd\`a, Radchenko, and Seip for the
three-term recurrence attached to the H\"ormander--Bernhardsson extremal function
\(\varphi\). More precisely, define
\[
\widetilde u_{-1}=0,\qquad \widetilde u_0=1,
\]
and
\[
\widetilde u_{n+1}
=
\frac{4n+2}{n+1}\bigl(n(n+1)-\lambda\bigr)\widetilde u_n
+
\frac{4n}{n+1}x\,\widetilde u_{n-1}.
\]
Then
\[
\widetilde u_n(x,\lambda)\in\Z[x,\lambda]
\qquad(n\ge0).
\]

The proof is a determinant comparison in the scaled Legendre basis. After sign reversal and
central-binomial normalization, the recurrence becomes exactly the continuant recurrence of a
finite tridiagonal compression. In particular, if \(T_n(a,\lambda)\) denotes the \(n\)th BOCRS
tridiagonal truncation, then
\[
\widetilde u_{n+1}(a^2,\lambda)=\binom{2n+2}{n+1}\det T_n(a,\lambda).
\]

As consequences, we derive that
\[
\left(\frac{\pi}{4C}\right)^2
\quad\text{and}\quad
-\frac{L_\tau(1)}{2C}
\]
are not simultaneously rational, where \(C\) is the sharp point-evaluation constant for
\(PW^1\), \(\pm\tau_n\) are the nonzero zeros of \(\varphi\), and \( L_\tau(1)=\sum_{n\ge1}\frac{(-1)^n}{\tau_n}.\)
Finally, if we write \(\varphi(z)=\sum_{n\ge0}c_n z^{2n},\) then
\[
c_n\in C^n\,\Z[\pi^2,C,L_\tau(1)]
\qquad(n\ge0).
\]
\end{abstract}

\maketitle

\section{Introduction}
\label{sec:introduction}

The H\"ormander--Bernhardsson extremal function \(\varphi\) is the unique entire function of
exponential type at most \(\pi\) such that \(\varphi(0)=1\) and
\(\|\varphi\|_{L^1(\R)}\) is minimal. Introduced by H\"ormander and Bernhardsson
\cite{HB93}, it is the extremizer in the \(L^1\) point-evaluation problem for the
Paley--Wiener space \(PW^1\); see also
\cite{BhatiaDavisKoosis89,BrevigChirreOrtegaSeip24} for related extremal questions in
Paley--Wiener and Fourier analysis.

Bondarenko, Ortega-Cerd\`a, Radchenko, and Seip (BOCRS) developed a detailed analytic theory of
\(\varphi\). They showed that
\[
\varphi(z)=\Phi(z)\Phi(-z),
\]
identified the differential equation satisfied by \(\Phi\), derived an odd inverse-power
expansion for the positive zeros \((\tau_n)_{n\ge1}\), and introduced the two-parameter
family of differential operators
\[
L_{a,b}(f)(z):=z^2f''(z)+(2z-a)f'(z)+b^2z^2f(z)
\]
governing the associated spectral problem \cite{BOCRS}. In \cite[\S10.2--\S10.3]{BOCRS} they
isolated two arithmetic conjectures. The first was proved in \cite{NguyenDangHB}. The main
purpose of the present paper is to prove the second. After the completion of the present manuscript, Danylo Radchenko informed the author that he and Wadim Zudilin have also obtained an independent proof of Conjecture~2, as part of a more general integrality theorem \cite{DZforthcoming}. Their approach is different from the determinant-comparison method used here, while the present paper also emphasizes consequences related to the irrationality question for \(L_\tau(1)\).

BOCRS observed that if \(f\) is a \(\lambda\)-eigenfunction of \(L_{1,b}\), then the even
Taylor coefficients of \(f(z)f(-z)\) satisfy the three-term recurrence
\[
u_{-1}=0,\qquad u_0=1,
\]
\[
u_{n+1}
=
\frac{4n+2}{n+1}\bigl(n(n+1)-\lambda\bigr)u_n
+
\frac{4n}{n+1}b^2u_{n-1}
\qquad(n\ge0).
\]
Conjecture~2 in \cite[\S10.3]{BOCRS} asserts that
\[
u_n(b,\lambda)\in \Z[b^2,\lambda]
\qquad(n\ge0).
\]
To work universally, we set \(x=b^2\) and define
\[
\widetilde u_{-1}=0,\qquad \widetilde u_0=1,
\]
\[
\widetilde u_{n+1}
=
\frac{4n+2}{n+1}\bigl(n(n+1)-\lambda\bigr)\widetilde u_n
+
\frac{4n}{n+1}x\,\widetilde u_{n-1}
\qquad(n\ge0).
\]
The first terms are
\[
\widetilde u_1=-2\lambda,\qquad
\widetilde u_2=6\lambda^2-12\lambda+2x,\qquad
\widetilde u_3=-20\lambda^3+160\lambda^2-12\lambda x-240\lambda+40x,
\]
already illustrating the unexpected cancellation of denominators.

Our first main result proves that the entire two-parameter family $\widetilde u_n(x,\lambda)$ is integral \emph{before} specialization.
\begin{theorem}\label{thm:main}
For every $n\ge 0$ one has
\[
 \widetilde u_n(x,\lambda)\in \Z[x,\lambda].
\]
Consequently,
\[
 u_n(b,\lambda)=\widetilde u_n(b^2,\lambda)\in \Z[b^2,\lambda]\subset \Z[b,\lambda]
\qquad(n\ge 0).
\]
\end{theorem}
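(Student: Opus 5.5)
The plan is to turn the recurrence into a continuant, i.e.\ the determinant of a tridiagonal matrix, and then to read that determinant as a characteristic-type determinant of an operator on polynomials of degree at most $n$. Computed in the monomial basis, this determinant has visibly controlled denominators.

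\textbf{Step 1 (central-binomial normalization).} Put $v_n=\widetilde u_n/\binom{2n}{n}$. From
\[
\binom{2n+2}{n+1}\Big/\binom{2n}{n}=\frac{2(2n+1)}{n+1},
\]
the recurrence becomes
\[
v_{n+1}=\bigl(n(n+1)-\lambda\bigr)v_n+\frac{n^2}{4n^2-1}\,x\,v_{n-1},\qquad v_{-1}=0,\ v_0=1.
\]
This is the continuant recurrence for the determinant of the $(n+1)\times(n+1)$ tridiagonal matrix $T_n$ with diagonal entries $k(k+1)-\lambda$ for $0\le k\le n$. Its off-diagonal entries must have products $-\,k^2x/((2k-1)(2k+1))$; the sign reversal can be absorbed by taking $x=-a^2$ formally, or by using $ia$ below.

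Now $k/(2k+1)$ and $k/(2k-1)$ are exactly the coefficients in $tP_k=\frac{(k+1)P_{k+1}+kP_{k-1}}{2k+1}$. So $T_n$ is the matrix, in the Legendre basis $P_0,\dots,P_n$, of the compression $\Pi_n M\Pi_n$ of the operator
\[
M=-\tfrac{d}{dt}(1-t^2)\tfrac{d}{dt}-\lambda+c\,t
\]
to polynomials of degree $\le n$. Here $c$ is a scalar with $c^2=\pm x$ (sign fixed by the reversal), and $\Pi_n$ is the orthogonal projection. I would check this identification entry by entry; it is the "Legendre compression."

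\textbf{Step 2 (change to an integral basis).} It is better to work on $[0,1]$ with the shifted Legendre polynomials $\widetilde P_k(t)=P_k(2t-1)$. These have integer coefficients and leading coefficient $\binom{2k}{k}$. The operator becomes $-\frac{d}{dt}t(1-t)\frac{d}{dt}-\lambda+c(2t-1)$. Since the determinant of a linear map on the $(n+1)$-dimensional space of polynomials of degree $\le n$ does not depend on the basis, I compute $\det(\Pi_nM\Pi_n)$ in the monomial basis $1,t,\dots,t^n$.

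The differential part maps $t^j$ to $-j^2t^{j-1}+j(j+1)t^j$, which has integer coefficients. Multiplication by $2t-1$ keeps degree $\le n$ except on $t^n$, where it produces $2t^{n+1}$. Its projection is
\[
\Pi_n t^{n+1}=t^{n+1}-\widetilde P_{n+1}(t)\Big/\binom{2n+2}{n+1},
\]
which is a polynomial of degree $\le n$ whose coefficients lie in $\binom{2n+2}{n+1}^{-1}\Z$.

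\textbf{Step 3 (conclusion).} In the monomial basis, every column of the matrix has entries in $\Z[c,\lambda]$ except the last one, which lies in $\binom{2n+2}{n+1}^{-1}\Z[c,\lambda]$. The determinant is linear in that column, so $\binom{2n+2}{n+1}\det T_n\in\Z[c,\lambda]$. By Step 1 this equals $\widetilde u_{n+1}$.

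Finally, $\widetilde u_{n+1}$ is a polynomial in $c^2=\pm x$: the recurrence only involves $x$, or equivalently $c\mapsto -c$ corresponds to $t\mapsto 1-t$, under which the determinant is invariant. Hence $\widetilde u_{n+1}\in\Z[x,\lambda]$, and the statement about $u_n(b,\lambda)=\widetilde u_n(b^2,\lambda)$ follows immediately.

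\textbf{Main obstacle.} The delicate point is Step 1's identification of the continuant with the compression determinant, including the signs. The continuant depends only on the products of symmetric off-diagonal pairs, and the projection in Step 2 must be the $L^2$-orthogonal one, so that the compression really is tridiagonal in the Legendre basis. I would verify this against the listed values $\widetilde u_1$, $\widetilde u_2$ and $\widetilde u_3$.
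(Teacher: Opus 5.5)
Your proposal is correct and is essentially the paper's proof. It uses the same central-binomial normalization, the same identification of the continuant with the Legendre compression of $\mathcal L+ct-\lambda$ (with $c^2=-x$), and the same passage to the monomial basis to read off integrality. The only difference is how you handle the boundary: the paper avoids the projection with the bordered map $(f,\mu)\mapsto(\mathcal L+\beta t-\lambda)f-\mu Q_n$ from polynomials of degree $\le n-1$ plus one extra generator to polynomials of degree $\le n$, whose monomial matrix is integral with determinant exactly $-\widetilde u_n(-\beta^2,\lambda)$, whereas you keep the projected endomorphism and confine the denominator $\binom{2n+2}{n+1}$ to its last column (one column operation and a cofactor expansion turn one matrix into the other, and your shift to $[0,1]$ is harmless but unnecessary, since $Q_{n+1}=2^{n+1}P_{n+1}$ is already integral with leading coefficient $\binom{2n+2}{n+1}$).
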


The second main result identifies the same sequence with exact finite BOCRS compressions. Writing
\[
\widehat u_n(a,\lambda):=\widetilde u_n(a^2,\lambda),
\]
and letting \(T_n(a,\lambda)\) denote the tridiagonal matrix defined by 
\[
T_0(a,\lambda):=(-\lambda),
\]
and, for \(n\ge 1\),
\begin{equation}
T_n(a,\lambda):=
\begin{pmatrix}
-\lambda & \dfrac{a}{3} & 0 & \cdots & 0 \\
-a & 2-\lambda & \dfrac{2a}{5} & \ddots & \vdots \\
0 & -\dfrac{2a}{3} & 6-\lambda & \ddots & 0 \\
\vdots & \ddots & \ddots & \ddots & \dfrac{na}{2n+1} \\
0 & \cdots & 0 & -\dfrac{na}{2n-1} & n(n+1)-\lambda
\end{pmatrix}.
\end{equation}

\begin{theorem}[Exact finite-compression identity]
For every \(n\ge 0\),
\[
\widehat u_{n+1}(a,\lambda)=\binom{2n+2}{n+1}\det T_n(a,\lambda).
\]
\end{theorem}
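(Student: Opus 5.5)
The plan is to show that both sides satisfy the same three-term recurrence with the same initial data, and then conclude by induction on \(n\).

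\textbf{Continuant recurrence for \(\det T_n\).} Index the rows and columns of \(T_n\) by \(k=0,1,\dots,n\). The entries are:
\[
(T_n)_{k,k}=k(k+1)-\lambda,\qquad (T_n)_{k-1,k}=\frac{ka}{2k+1},\qquad (T_n)_{k,k-1}=-\frac{ka}{2k-1}.
\]
These agree with the displayed matrix: for \(k=1\) they give \(a/3\) and \(-a\), and for \(k=2\) they give \(2a/5\) and \(-2a/3\). The product of the two off-diagonal entries in position \(k\) is \(-k^2a^2/(4k^2-1)\). Set \(D_n:=\det T_n(a,\lambda)\), with the conventions \(D_{-1}:=1\) and \(D_{-2}:=0\). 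Expanding the determinant along the last row gives the standard continuant recurrence
\[
D_n=\bigl(n(n+1)-\lambda\bigr)D_{n-1}+\frac{n^2a^2}{(2n-1)(2n+1)}\,D_{n-2}\qquad(n\ge 0).
\]
For \(n=0\) this correctly gives \(D_0=-\lambda\), and for \(n=1\) it gives \(D_1=(2-\lambda)(-\lambda)+a^2/3\), which matches the \(2\times 2\) determinant.

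\textbf{Central-binomial normalization.} Define \(v_{n+1}:=\binom{2n+2}{n+1}D_n\) for \(n\ge -2\). This gives \(v_0=1\) and \(v_{-1}=0\), which are the initial values of \(\widehat u\). Multiply the continuant recurrence by \(\binom{2n+2}{n+1}\) and use the two ratios
\[
\frac{\binom{2n+2}{n+1}}{\binom{2n}{n}}=\frac{2(2n+1)}{n+1},\qquad
\frac{\binom{2n+2}{n+1}}{\binom{2n-2}{n-1}}=\frac{4(2n+1)(2n-1)}{n(n+1)}.
\]
The factor \((2n-1)(2n+1)\) cancels against the denominator of the off-diagonal product, and \(n^2/(n(n+1))=n/(n+1)\). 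The result is
\[
v_{n+1}=\frac{4n+2}{n+1}\bigl(n(n+1)-\lambda\bigr)v_n+\frac{4n}{n+1}a^2\,v_{n-1}.
\]
This is exactly the defining recurrence of \(\widetilde u\) evaluated at \(x=a^2\), that is, the recurrence for \(\widehat u\).

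\textbf{Conclusion.} The sequences \((v_n)\) and \((\widehat u_n)\) satisfy the same recurrence, and \(v_{-1}=\widehat u_{-1}=0\), \(v_0=\widehat u_0=1\). Induction on \(n\) therefore gives \(v_n=\widehat u_n\) for all \(n\ge -1\), which is the claimed identity \(\widehat u_{n+1}=\binom{2n+2}{n+1}\det T_n\). As a sanity check, \(v_1=2(-\lambda)=-2\lambda=\widetilde u_1\), and \(v_2=6D_1=6\lambda^2-12\lambda+2a^2\), which matches \(\widetilde u_2\).

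The main obstacle is bookkeeping rather than ideas. One must check that the sign of the off-diagonal product, namely \(-(\text{upper})(\text{lower})=+k^2a^2/(4k^2-1)\), produces the \(+x\,\widetilde u_{n-1}\) term. One must also check that the boundary conventions \(D_{-1}=1\) and \(D_{-2}=0\) make the \(n=0\) and \(n=1\) cases of the recurrence valid. The "sign reversal" mentioned in the abstract is already absorbed here, because the matrix carries \(-\lambda\) and a negative subdiagonal.
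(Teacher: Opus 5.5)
Your proof is correct and is essentially the paper's argument: a last-row continuant recurrence for $\det T_n$, rescaling by the central binomial coefficients, and uniqueness of the recursively defined sequence. The only difference is cosmetic. You run the recurrence from $n=0$ using the convention $D_{-2}=0$; at $n=0$ the ratio $\binom{2n+2}{n+1}/\binom{2n-2}{n-1}$ is undefined, but that term vanishes anyway, as your check $v_1=-2\lambda$ confirms. The paper instead starts at $n\ge 1$ and uses $U_0=1$, $U_1=-2\lambda$ as initial data.
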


Thus the BOCRS truncation scheme is not merely numerical: after the canonical
central-binomial scaling, it is exactly the characteristic-polynomial sequence of the
finite principal compressions.

The paper also derives two arithmetic consequences. The rational-slice theorem
(Theorem~\ref{thm:rational-slice}) shows that if
\[
F_{a,\lambda}(z):=\sum_{n\ge0}\widehat u_n(a,\lambda)z^{2n}
\]
is entire, nonpolynomial, and of exponential type at most \(2a\), then \(a^2\) and
\(\lambda\) are not both rational. Applied to the H\"ormander--Bernhardsson branch, this yields
Corollary~\ref{cor:HB-normalized-obstruction}:
\[
\left(\frac{\pi}{4C}\right)^2
\qquad\text{and}\qquad
-\frac{L_\tau(1)}{2C}
\]
are not both rational. Here \(C\) is the sharp point-evaluation constant and
\[
L_\tau(s):=\sum_{n\ge1}\frac{(-1)^n}{\tau_n^s}.
\]

A second consequence is a coefficient-ring theorem. If
\[
\varphi(z)=\sum_{n=0}^{\infty}c_n z^{2n},
\]
then Corollary~\ref{cor:HB-coefficient-ring} shows that
\[
c_n\in C^n\,\Z[\pi^2,C,L_\tau(1)]
\qquad(n\ge0).
\]
In particular, only even powers of \(\pi\) can occur in the Taylor coefficients of
\(\varphi\). This is obtained from a sharp weighted refinement of the universal integrality
statement: every monomial \(x^r\lambda^s\) occurring in \(\widetilde u_n(x,\lambda)\)
satisfies
\[
2r+s\le n.
\]

\subsection*{Method of proof}
The point of the paper is not merely that denominators cancel. The proof identifies the
mechanism that forces the cancellation, and that mechanism is conceptually unexpected in the
BOCRS setting. Integrality phenomena of this kind are often associated with regular-singular
geometry, i.e., Picard--Fuchs equations, diagonals, \(G\)-functions, or related algebraic
structures. BOCRS explicitly emphasize that Conjecture~2 lies outside that framework: the
differential equation attached to \(L_{a,b}\) has irregular singularities at \(0\) and
\(\infty\), and the conjectured integrality therefore calls for a different explanation
\cite[\S4,\S10.3]{BOCRS}. Our contribution is to show that the arithmetic structure is
governed instead by an exact finite-dimensional Legendre compression.

The proof begins by the sign reversal \(x=-\beta^2\) and the normalization
\[
v_n(x,\lambda):=\frac{\widetilde u_n(x,\lambda)}{\binom{2n}{n}}.
\]
After dividing by the central binomial coefficient, the recurrence becomes exactly the
continuant recurrence of a tridiagonal determinant arising from the action of
\[
\mathcal L+\beta t-\lambda,
\qquad
\mathcal L:=-\frac{d}{dt}\Bigl((1-t^2)\frac{d}{dt}\Bigr),
\]
on the scaled Legendre basis \(Q_n(t)=2^nP_n(t)\). The same operator, written in the monomial
basis, gives a matrix with entries in \(\Z[\beta,\lambda]\). Comparing the two determinant
realizations via change of basis yields
\[
\det M_n=-\widetilde u_n(-\beta^2,\lambda),
\]
and hence integrality. In particular, the central binomial coefficient is not an auxiliary
normalization: it is the exact change-of-basis factor between the Legendre and monomial
compressions.

Taken together with \cite{NguyenDangHB}, the two arithmetic conjectures isolated in
\cite[\S10.2--\S10.3]{BOCRS} are now resolved.

We also emphasize that the BOCRS recurrence is holonomic in the usual sense: after clearing the
denominator \(n+1\), it becomes a second-order \(P\)-recurrence, and its ordinary generating
series is \(D\)-finite. However, this formal holonomic structure plays no essential role in the
proof. The arithmetic content of Theorem~\ref{thm:main} is not a consequence of holonomicity
alone; it comes from an exact finite-dimensional Legendre compression identity, which is
particularly unexpected here because the underlying BOCRS differential equation has irregular
singularities.

\subsection*{Organization of the paper}

The paper is organized as follows. Section~\ref{sec:main-proof} proves
Theorem~\ref{thm:main} by the Legendre determinant comparison.
Section~\ref{sec:continued-fractions} develops the BOCRS truncation and continued-fraction
formalism, proves Theorem~\ref{thm:truncation-identity}, and derives the rational-slice
consequences. Section~\ref{sec:weighted-arithmetic} proves the weighted support bound and the
coefficient-ring theorem.

\subsection*{Acknowledgements}

We thank Quoc-Hung Nguyen for bringing this conjecture to our attention. We thank Danylo Radchenko for kindly informing us of his forthcoming joint paper with Wadim Zudilin. We are grateful to the Morningside Center of Mathematics, Chinese Academy of Sciences, for its support and a stimulating research environment.

\section{Legendre determinant proof of Theorem~\ref{thm:main}}
\label{sec:main-proof}

\subsection{The universal recurrence}

We work with the BOCRS recurrence
\begin{equation}\label{eq:original-recurrence}
 u_{-1}=0,\qquad u_0=1,
\end{equation}
\begin{equation}\label{eq:u-recurrence}
 u_{n+1}=\frac{4n+2}{n+1}\bigl(n(n+1)-\lambda\bigr)u_n
 +\frac{4n}{n+1}b^2u_{n-1}
 \qquad(n\ge 0).
\end{equation}
Since the parameter \(b\) appears only through \(b^2\), it is convenient to introduce a new
variable \(x\) and define
\begin{equation}\label{eq:utilde-recurrence-init}
 \widetilde u_{-1}=0,\qquad \widetilde u_0=1,
\end{equation}
\begin{equation}\label{eq:utilde-recurrence}
 \widetilde u_{n+1}=\frac{4n+2}{n+1}\bigl(n(n+1)-\lambda\bigr)\widetilde u_n
 +\frac{4n}{n+1}x\,\widetilde u_{n-1}
 \qquad(n\ge 0).
\end{equation}
By uniqueness of recursively defined sequences in \(\Q[x,\lambda]\),
\[
u_n(b,\lambda)=\widetilde u_n(b^2,\lambda)\qquad(n\ge -1).
\]
Thus Theorem~\ref{thm:main} implies Conjecture~2 from \cite[\S10.3]{BOCRS}.

The proof has four steps. First, by Lemma~\ref{lem:beta-substitution} it is enough to prove
integrality after the sign reversal \(x=-\beta^2\). Second, after dividing by the central
binomial coefficient \(C_n=\binom{2n}{n}\), the recurrence becomes exactly the continuant
recurrence of a tridiagonal determinant \(D_n\) arising from the action of
\(\mathcal L+\beta t-\lambda\) on the scaled Legendre basis \(Q_n=2^nP_n\). Third, the same
operator gives an integral matrix \(M_n\) in the monomial basis. Fourth, comparison of the
two determinant realizations through change of basis yields
\[
\det M_n=-\widetilde u_n(-\beta^2,\lambda),
\]
which forces integrality. The remaining subsections carry out these four steps.

\subsection{Reduction to a quadratic specialization}

The first step is a simple algebraic lemma.

\begin{lemma}\label{lem:beta-substitution}
Let $f(x,\lambda)\in \Q[x,\lambda]$. If
\[
 f(-\beta^2,\lambda)\in \Z[\beta,\lambda],
\]
then in fact
\[
 f(x,\lambda)\in \Z[x,\lambda].
\]
\end{lemma}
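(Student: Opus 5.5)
The idea is to compare monomials directly. Write
\[
f(x,\lambda)=\sum_{r,s} a_{r,s}\,x^r\lambda^s
\]
with finitely many nonzero \(a_{r,s}\in\Q\). The substitution \(x\mapsto -\beta^2\) gives
\[
f(-\beta^2,\lambda)=\sum_{r,s}(-1)^r a_{r,s}\,\beta^{2r}\lambda^s .
\]
This map is injective on monomials: distinct pairs \((r,s)\) go to distinct monomials \(\beta^{2r}\lambda^s\) in \(\Q[\beta,\lambda]\). So no two terms can merge or cancel, and the coefficient of \(\beta^{2r}\lambda^s\) in \(f(-\beta^2,\lambda)\) is exactly \((-1)^r a_{r,s}\).

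The hypothesis says this polynomial lies in \(\Z[\beta,\lambda]\), so every coefficient is an integer. Since \(\beta,\lambda\) are algebraically independent, coefficients in \(\Q[\beta,\lambda]\) are uniquely determined. Hence \((-1)^r a_{r,s}\in\Z\), and therefore \(a_{r,s}\in\Z\) for all \(r,s\), which gives \(f\in\Z[x,\lambda]\).

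Equivalently, one can say that the ring map \(\Q[x,\lambda]\to\Q[\beta,\lambda]\), \(x\mapsto-\beta^2\), is injective. It sends the \(\Q\)-basis \(\{x^r\lambda^s\}\) bijectively, up to sign, onto a subset of the basis \(\{\beta^i\lambda^s\}\). Integrality of coordinates in the target basis then pulls back to integrality of coordinates in the source basis.

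The only step needing care is the injectivity on monomials, namely that \(r\mapsto 2r\) is injective. That step is trivial, so I expect no real obstacle.
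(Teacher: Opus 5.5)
Your proposal is correct and is essentially the paper's argument: both substitute $x=-\beta^2$ and read off coefficients using uniqueness of coefficients in a polynomial ring. The only difference is bookkeeping: the paper groups by powers of $x$ with coefficients in $\Q[\lambda]$ and views the target as $\Z[\lambda][\beta]$, whereas you expand fully into monomials $x^r\lambda^s$.
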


\begin{proof}
Write
\[
 f(x,\lambda)=\sum_{r=0}^m a_r(\lambda)x^r,
 \qquad a_r(\lambda)\in \Q[\lambda].
\]
Then
\[
 f(-\beta^2,\lambda)=\sum_{r=0}^m (-1)^r a_r(\lambda)\beta^{2r}.
\]
Now $\Z[\beta,\lambda]=\Z[\lambda][\beta]$ is a polynomial ring in the indeterminate $\beta$ over $\Z[\lambda]$. Hence the coefficients of a polynomial in $\beta$ are unique. Since $f(-\beta^2,\lambda)$ lies in $\Z[\lambda][\beta]$, each coefficient of $\beta^{2r}$ must belong to $\Z[\lambda]$ and each odd coefficient must vanish. Therefore
\[
 (-1)^r a_r(\lambda)\in \Z[\lambda]
 \qquad(0\le r\le m),
\]
so $a_r(\lambda)\in \Z[\lambda]$ for all $r$. Thus $f(x,\lambda)\in \Z[x,\lambda]$.
\end{proof}

By Lemma \ref{lem:beta-substitution}, Theorem \ref{thm:main} will follow once we prove that
\[
 \widetilde u_n(-\beta^2,\lambda)\in \Z[\beta,\lambda]
 \qquad(n\ge 0).
\]
We now turn to the determinantal construction that yields this fact.

\subsection{Scaled Legendre polynomials}

Let \(P_n(t)\) denote the classical Legendre polynomial of degree \(n\), normalized by the generating function
\[
\sum_{n\ge 0} P_n(t)\,z^n=\frac{1}{\sqrt{1-2tz+z^2}}.
\]
Equivalently, \(P_n\) is given by Rodrigues' formula
\[
P_n(t)=\frac{1}{2^n n!}\frac{d^n}{dt^n}(t^2-1)^n.
\]

For our purposes it is convenient to use the scaled polynomials
\[
Q_n(t):=2^nP_n(t)\qquad(n\ge 0).
\]
Then \(Q_n(t)\in \mathbb Z[t]\), and one has the explicit expansion
\[
Q_n(t)
=
\sum_{k=0}^{\lfloor n/2\rfloor}
(-1)^k
\binom{n}{k}
\binom{2n-2k}{n}
\,t^{\,n-2k}.
\]
In particular, the leading coefficient of \(Q_n\) is
\[
C_n:=\binom{2n}{n}.
\]

The first values are
\begin{align*}
Q_0(t)&=1,\\
Q_1(t)&=2t,\\
Q_2(t)&=6t^2-2,\\
Q_3(t)&=20t^3-12t,\\
Q_4(t)&=70t^4-60t^2+6,\\
Q_5(t)&=252t^5-280t^3+60t.
\end{align*}

The Legendre differential equation becomes
\[
-\frac{d}{dt}\!\left((1-t^2)\frac{d}{dt}Q_n(t)\right)=n(n+1)\,Q_n(t).
\]
Thus, if we set
\[
\mathcal L:=-\frac{d}{dt}\!\left((1-t^2)\frac{d}{dt}\right),
\]
then
\[
\mathcal L Q_n=n(n+1)Q_n.
\]

Set \(Q_{-1}:=0\) for convenience. The three-term recurrence for the scaled family is
\[
(n+1)Q_{n+1}(t)=2(2n+1)t\,Q_n(t)-4nQ_{n-1}(t)
\qquad(n\ge 1),
\]
with initial conditions
\[
Q_0(t)=1,\qquad Q_1(t)=2t.
\]
With the convention \(Q_{-1}=0\), this can be rewritten as
\[
t\,Q_n(t)
=
\frac{n+1}{2(2n+1)}\,Q_{n+1}(t)
+
\frac{2n}{2n+1}\,Q_{n-1}(t)
\qquad(n\ge 0).
\]

It follows that
\[
(\mathcal L+\beta t-\lambda)Q_n
=
\bigl(n(n+1)-\lambda\bigr)Q_n
+
\frac{2n\beta}{2n+1}Q_{n-1}
+
\frac{(n+1)\beta}{2(2n+1)}Q_{n+1}
\qquad(n\ge 0).
\]
This is the tridiagonal action used in the determinant comparison argument.

Finally, \(Q_n\) has the parity property
\[
Q_n(-t)=(-1)^nQ_n(t),
\]
so \(Q_n\) is even for \(n\) even and odd for \(n\) odd. For standard formulas for Legendre polynomials, see, for example, DLMF, Chapter~18, especially \S\S18.5, 18.8, and 18.9.

We have shown the following proposition.

\begin{proposition}\label{prop:Legendre-facts}
For every $n\ge 0$ the polynomial $Q_n$ belongs to $\Z[t]$ and has the explicit expansion
\begin{equation}\label{eq:Q-explicit}
 Q_n(t)=\sum_{k=0}^{\lfloor n/2\rfloor}
 (-1)^k\binom{n}{k}\binom{2n-2k}{n}t^{n-2k}.
\end{equation}
Also set \(Q_{-1}:=0\). In particular, the leading coefficient of $Q_n$ is
\begin{equation}\label{eq:Cn}
 C_n:=\binom{2n}{n}.
\end{equation}
Moreover, with
\begin{equation}\label{eq:L-operator}
 \mathcal L:=-\frac{d}{dt}\Bigl((1-t^2)\frac{d}{dt}\Bigr),
\end{equation}
one has
\begin{equation}\label{eq:Legendre-eigen}
 \mathcal LQ_n=n(n+1)Q_n,
\end{equation}
and the three-term recurrence
\begin{equation}\label{eq:Q-recurrence}
 (n+1)Q_{n+1}=2(2n+1)tQ_n-4nQ_{n-1}
 \qquad(n\ge 1).
\end{equation}
Equivalently,
\begin{equation}\label{eq:tQn}
 tQ_n=\frac{n+1}{2(2n+1)}Q_{n+1}+\frac{2n}{2n+1}Q_{n-1}
 \qquad(n\ge 0).
\end{equation}
Consequently,
\begin{equation}\label{eq:operator-on-Q}
 (\mathcal L+\beta t-\lambda)Q_n
 =(n(n+1)-\lambda)Q_n
 +\frac{2n\beta}{2n+1}Q_{n-1}
 +\frac{(n+1)\beta}{2(2n+1)}Q_{n+1}
 \qquad(n\ge 0).
\end{equation}
\end{proposition}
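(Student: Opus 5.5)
The plan is to prove each assertion of Proposition~\ref{prop:Legendre-facts} directly from Rodrigues' formula. Only the explicit expansion \eqref{eq:Q-explicit} needs real work; everything else follows from it.

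\textbf{Explicit expansion.} Multiplying Rodrigues' formula by \(2^n\) gives
\[
Q_n(t)=\frac{1}{n!}\frac{d^n}{dt^n}(t^2-1)^n,
\qquad
(t^2-1)^n=\sum_{k=0}^{n}(-1)^k\binom{n}{k}t^{2n-2k}.
\]
Differentiating term by term \(n\) times, the monomial \(t^{2n-2k}\) survives only when \(2n-2k\ge n\), that is, \(k\le\lfloor n/2\rfloor\). It contributes
\[
\frac{(2n-2k)!}{(n-2k)!}\,t^{n-2k}.
\]
Dividing by \(n!\) gives the coefficient \(\binom{2n-2k}{n}\), which proves \eqref{eq:Q-explicit}. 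Integrality \(Q_n\in\Z[t]\) follows at once, since every coefficient is a signed product of binomial coefficients. The leading coefficient is the \(k=0\) term, namely \(\binom{2n}{n}=C_n\), which is \eqref{eq:Cn}. Each power \(t^{n-2k}\) has the parity of \(n\), so \(Q_n(-t)=(-1)^nQ_n(t)\).

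\textbf{Eigenvalue equation.} The relation \eqref{eq:Legendre-eigen} is the classical Legendre equation \((1-t^2)P_n''-2tP_n'+n(n+1)P_n=0\), and it is linear, so scaling by \(2^n\) preserves it. To keep the argument self-contained, I would verify it from the expansion. The operator \(\mathcal L=(t^2-1)\frac{d^2}{dt^2}+2t\frac{d}{dt}\) sends \(t^m\) to
\[
m(m+1)t^m-m(m-1)t^{m-2}.
\]
Hence \(\mathcal L\) preserves degree and is triangular in the monomial basis, with diagonal entries \(m(m+1)\). Write \(m=n-2k\). The eigen-equation then reduces to the coefficient identity
\[
\bigl(n(n+1)-m(m+1)\bigr)a_k=-(m+2)(m+1)\,a_{k-1},
\]
where \(a_k=(-1)^k\binom{n}{k}\binom{2n-2k}{n}\). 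This is a routine ratio check of the binomial coefficients. Alternatively, one can apply uniqueness: \(Q_n\) is orthogonal on \([-1,1]\) to all lower-degree polynomials, and \(\mathcal L\) is symmetric there.

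\textbf{Three-term recurrence.} For \eqref{eq:Q-recurrence} I would substitute \(z\mapsto 2z\) in the generating function, which gives
\[
G(t,z):=\sum_n Q_n z^n=(1-4tz+4z^2)^{-1/2}.
\]
Differentiating,
\[
(1-4tz+4z^2)\,\partial_z G=(2t-4z)\,G.
\]
Comparing coefficients of \(z^n\) yields \((n+1)Q_{n+1}-4tnQ_n+4(n-1)Q_{n-1}=2tQ_n-4Q_{n-1}\), which rearranges to \((n+1)Q_{n+1}=2(2n+1)tQ_n-4nQ_{n-1}\).

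Solving this for \(tQ_n\) gives \eqref{eq:tQn}. The case \(n=0\) is checked separately using \(Q_{-1}=0\): it reads \(t=\tfrac12 Q_1\), which is true. Finally, \eqref{eq:operator-on-Q} follows by linearity, combining \eqref{eq:Legendre-eigen}, \(\beta\) times \eqref{eq:tQn}, and \(-\lambda Q_n\).

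The only step with any real friction is the coefficient check for \eqref{eq:Legendre-eigen}. For that step I would cite the classical Legendre equation directly, together with its invariance under the constant scaling \(2^n\).
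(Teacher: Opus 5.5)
Your proposal is correct and takes essentially the paper's route. The paper simply assembles the standard Rodrigues-formula, generating-function, Legendre-equation and three-term-recurrence facts for $Q_n=2^nP_n$ (citing DLMF, Chapter~18), then obtains \eqref{eq:tQn} and \eqref{eq:operator-on-Q} by the same rearrangement and linear combination you describe. You go further by actually deriving the explicit expansion from Rodrigues' formula, the recurrence from the rescaled generating function $(1-4tz+4z^2)^{-1/2}$, and the eigen-equation via the coefficient identity $2k(2n-2k+1)a_k=-(m+2)(m+1)a_{k-1}$; all of these check out.
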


\subsection{An integral determinant}

Fix
\[
 R:=\Z[\beta,\lambda].
\]
For $m\ge 0$ we denote by $R[t]_{\le m}$ the $R$-module of polynomials in $t$ of degree at most $m$.

For every integer $n\ge 1$ define an $R$-linear map
\begin{equation}\label{eq:Phi-def}
 \Phi_n:R[t]_{\le n-1}\oplus R\longrightarrow R[t]_{\le n},
 \qquad
 \Phi_n(f,\mu):=(\mathcal L+\beta t-\lambda)f-\mu Q_n.
\end{equation}
Let $e:=(0,1)\in R[t]_{\le n-1}\oplus R$. We consider the monomial bases
\[
 \mathcal B_n^{\mathrm{dom}}=(1,t,\dots,t^{n-1},e),
 \qquad
 \mathcal B_n^{\mathrm{cod}}=(1,t,\dots,t^n).
\]
Let $M_n$ be the matrix of $\Phi_n$ with respect to these bases.

\begin{lemma}\label{lem:M-integral}
For every $n\ge 1$, the matrix $M_n$ has entries in $R$. In particular,
\[
 \det M_n\in R.
\]
\end{lemma}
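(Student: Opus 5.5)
The plan is to compute the image of each domain basis vector under $\Phi_n$ explicitly and check that every coefficient in the monomial basis of $R[t]_{\le n}$ lies in $R=\Z[\beta,\lambda]$. First we check that $\Phi_n$ really lands in $R[t]_{\le n}$. For $f\in R[t]_{\le n-1}$, the operator $\mathcal L$ preserves degree, and multiplication by $t$ raises degree by one, so $(\mathcal L+\beta t-\lambda)f$ has degree at most $n$. Moreover $Q_n\in\Z[t]$ has degree $n$ by Proposition~\ref{prop:Legendre-facts}. So the columns of $M_n$ are the coordinate vectors of $\Phi_n(t^k,0)$ for $0\le k\le n-1$, followed by that of $\Phi_n(0,1)=-Q_n$.

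For the monomial columns we compute directly. Expanding $\mathcal L=-(1-t^2)\frac{d^2}{dt^2}+2t\frac{d}{dt}$ on $t^k$ gives
\[
\mathcal L t^k=-k(k-1)t^{k-2}+k(k-1)t^k+2k\,t^k=k(k+1)t^k-k(k-1)t^{k-2}.
\]
Hence
\[
\Phi_n(t^k,0)=\bigl(k(k+1)-\lambda\bigr)t^k-k(k-1)t^{k-2}+\beta\,t^{k+1},
\]
where the $t^{k-2}$ term is absent for $k\le1$. All these coefficients lie in $\Z[\beta,\lambda]$, and $k+1\le n$ keeps the image inside the codomain.

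For the last column, the coordinates of $-Q_n$ are the integers $-(-1)^j\binom{n}{j}\binom{2n-2j}{n}$ from \eqref{eq:Q-explicit}, placed in degrees $n-2j$ with zeros elsewhere. These are integers, so they lie in $R$. Thus $M_n$ is an $(n+1)\times(n+1)$ matrix over the commutative ring $R$, and its determinant, a polynomial expression in the entries, lies in $R$.

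There is no real obstacle here. The only point needing care is the computation of $\mathcal L t^k$, in particular the sign of the $t^{k-2}$ term and the fact that it has integer coefficients. Integrality of $Q_n$ has already been recorded in Proposition~\ref{prop:Legendre-facts}, so it need not be reproved.
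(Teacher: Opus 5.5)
Your proposal is correct and follows essentially the same route as the paper. Both compute $(\mathcal L+\beta t-\lambda)t^k=\beta t^{k+1}+\bigl(k(k+1)-\lambda\bigr)t^k-k(k-1)t^{k-2}$ for the first $n$ columns and use $Q_n\in\Z[t]$ (Proposition~\ref{prop:Legendre-facts}) for the last column; your extra check that the image lies in $R[t]_{\le n}$ is a harmless addition.
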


\begin{proof}
For $m\ge 0$ one computes directly from \eqref{eq:L-operator} that
\begin{align*}
 (\mathcal L+\beta t-\lambda)t^m
 &= -\frac{d}{dt}\bigl((1-t^2)mt^{m-1}\bigr)+\beta t^{m+1}-\lambda t^m \\
 &= -\frac{d}{dt}\bigl(mt^{m-1}-mt^{m+1}\bigr)+\beta t^{m+1}-\lambda t^m \\
 &= -\bigl(m(m-1)t^{m-2}-m(m+1)t^m\bigr)+\beta t^{m+1}-\lambda t^m \\
 &= \beta t^{m+1}+\bigl(m(m+1)-\lambda\bigr)t^m-m(m-1)t^{m-2}.
\end{align*}
Here the term $t^{m-2}$ is absent when $m\le 1$, because then its coefficient $m(m-1)$ is zero. Since also $Q_n\in \Z[t]\subset R[t]$ by Proposition \ref{prop:Legendre-facts}, each column of $M_n$ has entries in $R$. Thus $M_n\in \operatorname{Mat}_{n+1}(R)$ and therefore $\det M_n\in R$.
\end{proof}

\subsection{The same determinant in the Legendre basis}

The matrix $M_n$ was defined over the ring $R$. To compare it with a matrix written in the $Q$-basis, we now extend scalars to the fraction field
\[
 K:=\Q(\beta,\lambda).
\]
Let
\[
 \Phi_{n,K}:=\Phi_n\otimes_R K:
 K[t]_{\le n-1}\oplus K\longrightarrow K[t]_{\le n}.
\]
Because the leading coefficient of $Q_j$ is the nonzero integer $C_j$, the families
\[
 \mathcal C_n^{\mathrm{dom}}=(Q_0,Q_1,\dots,Q_{n-1},e),
 \qquad
 \mathcal C_n^{\mathrm{cod}}=(Q_0,Q_1,\dots,Q_n)
\]
are $K$-bases.

\begin{remark}\label{rem:scalar-extension}
This scalar extension is essential. Over $R$ the family $(Q_0,Q_1)$ is \emph{not} a basis of $R[t]_{\le 1}$, since $Q_1=2t$ and $t\notin RQ_0+RQ_1$. Thus every change-of-basis argument involving the $Q$-basis must be performed only after passing to a ring in which the integers are invertible; the field $K$ is the cleanest choice.
\end{remark}

Let $N_n$ be the matrix of $\Phi_{n,K}$ with respect to the bases $\mathcal C_n^{\mathrm{dom}}$ and $\mathcal C_n^{\mathrm{cod}}$.

\begin{lemma}\label{lem:N-block}
For every $n\ge 1$, the matrix $N_n$ has the block form
\[
 N_n=
 \begin{pmatrix}
  B_n & 0 \\
  * & -1
 \end{pmatrix},
\]
where $B_n=(b_{ij})_{0\le i,j\le n-1}$ is the tridiagonal matrix determined by
\begin{align*}
 b_{jj}&=j(j+1)-\lambda &&(0\le j\le n-1),\\
 b_{j-1,j}&=\frac{2j\beta}{2j+1} &&(1\le j\le n-1),\\
 b_{j+1,j}&=\frac{(j+1)\beta}{2(2j+1)} &&(0\le j\le n-2),
\end{align*}
and all other entries equal to $0$. Equivalently,
\[
 B_n=
 \begin{pmatrix}
 -\lambda & \dfrac{2\beta}{3} & 0 & \cdots & 0 \\
 \dfrac{\beta}{2} & 2-\lambda & \dfrac{4\beta}{5} & \ddots & \vdots \\
 0 & \dfrac{\beta}{3} & 6-\lambda & \ddots & 0 \\
 \vdots & \ddots & \ddots & \ddots & \dfrac{2(n-1)\beta}{2n-1} \\
 0 & \cdots & 0 & \dfrac{(n-1)\beta}{2(2n-3)} & n(n-1)-\lambda
 \end{pmatrix}.
\]
In particular,
\begin{equation}\label{eq:detN}
 \det N_n=-\det B_n.
\end{equation}
\end{lemma}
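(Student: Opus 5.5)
The plan is to compute $N_n$ column by column, directly from the definition of $\Phi_{n,K}$ and the tridiagonal action \eqref{eq:operator-on-Q} of Proposition~\ref{prop:Legendre-facts}. The convention is that the $j$-th column of $N_n$ lists the coordinates, in the codomain basis $(Q_0,\dots,Q_n)$, of the image of the $j$-th domain basis vector.

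\textbf{Columns $0\le j\le n-1$.} Here the domain vector is $(Q_j,0)$, and
\[
\Phi_{n,K}(Q_j,0)=(\mathcal L+\beta t-\lambda)Q_j .
\]
By \eqref{eq:operator-on-Q} this equals
\[
(j(j+1)-\lambda)Q_j+\frac{2j\beta}{2j+1}Q_{j-1}+\frac{(j+1)\beta}{2(2j+1)}Q_{j+1}.
\]
Since $j+1\le n$, this is a genuine expansion in the codomain basis. For $j=0$ the $Q_{-1}$ term vanishes, by the convention $Q_{-1}=0$ and the factor $2j=0$. Reading off coefficients:
\begin{itemize}
\item the diagonal entry is $j(j+1)-\lambda$;
\item the entry in row $j-1$ is $\frac{2j\beta}{2j+1}$;
\item the entry in row $j+1$ is $\frac{(j+1)\beta}{2(2j+1)}$.
\end{itemize}
Restricted to rows $0,\dots,n-1$, these columns therefore give exactly $B_n$ as described.

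In the last of these columns, $j=n-1$, the row-$n$ entry is $\frac{n\beta}{2(2n-1)}$. This entry lies in the bottom row, which belongs to the block $*$. For $j\le n-2$, the row-$n$ entry is zero.

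\textbf{Last column.} The domain vector is $e$, and $\Phi_{n,K}(e)=-Q_n$. Its coordinate vector is $(0,\dots,0,-1)^{T}$. This produces the zero block above the corner and the corner entry $-1$.

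\textbf{Determinant.} Expanding $\det N_n$ along the last column gives $-1\cdot\det B_n$, which is \eqref{eq:detN}.

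The main point needing care is bookkeeping, not mathematics. First, the index convention must be applied consistently: the coefficient of $Q_i$ in $\Phi(Q_j)$ is entry $(i,j)$. This is what makes $b_{j-1,j}=\frac{2j\beta}{2j+1}$ the superdiagonal entry and $b_{j+1,j}$ the subdiagonal entry. Second, the displayed matrix must match the index formulas. Its subdiagonal reads $\beta/2,\beta/3,\dots$, which agrees with $\frac{(j+1)\beta}{2(2j+1)}$ at $j=0,1$. Its final diagonal entry is $(n-1)n-\lambda$, which agrees with $b_{n-1,n-1}$.

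I expect no genuine obstacle. All the mathematical content is already in \eqref{eq:operator-on-Q}.
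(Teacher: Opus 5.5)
Your proposal is correct and follows the paper's proof: you read off the first $n$ columns of $N_n$ from the tridiagonal action of $\mathcal L+\beta t-\lambda$ on the $Q_j$, note that $e\mapsto -Q_n$ gives the last column $(0,\dots,0,-1)^T$, and expand $\det N_n$ along that column. Your extra bookkeeping is accurate and only makes explicit what the paper leaves implicit — namely the vanishing $Q_{-1}$ term at $j=0$, the row-$n$ entry $\frac{n\beta}{2(2n-1)}$ of column $n-1$ lying in the $*$ block, and the check of the displayed entries against the index formulas.
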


\begin{proof}
For $0\le j\le n-1$, equation \eqref{eq:operator-on-Q} gives
\[
 \Phi_{n,K}(Q_j,0)
 =(j(j+1)-\lambda)Q_j
 +\frac{2j\beta}{2j+1}Q_{j-1}
 +\frac{(j+1)\beta}{2(2j+1)}Q_{j+1}.
\]
Thus the first $n$ columns of $N_n$ are tridiagonal in the basis $(Q_0,\dots,Q_n)$. The last basis vector $e$ is sent to
\[
 \Phi_{n,K}(0,1)=-Q_n,
\]
so the last column is zero except for a single entry $-1$ in the last row. This proves the asserted block form, and then \eqref{eq:detN} follows immediately by expansion along the last column.
\end{proof}

For $n\ge 1$ set
\[
 D_n:=\det B_n,
\]
and define also $D_0:=1$ (the determinant of the empty matrix).

\begin{lemma}\label{lem:D-recurrence}
The determinants $D_n$ satisfy
\begin{equation}\label{eq:D-recurrence}
 D_{n+1}=(n(n+1)-\lambda)D_n
 -\frac{n^2\beta^2}{(2n-1)(2n+1)}D_{n-1}
 \qquad(n\ge 1),
\end{equation}
with initial values
\begin{equation}\label{eq:D-initial}
 D_0=1,
 \qquad
 D_1=-\lambda.
\end{equation}
\end{lemma}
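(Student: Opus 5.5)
The plan is to use the standard continuant recurrence for tridiagonal determinants, expanding along the last row (or column) of $B_{n+1}$. By Lemma~\ref{lem:N-block}, $B_{n+1}$ is the $(n+1)\times(n+1)$ tridiagonal matrix indexed by $0\le i,j\le n$. Its diagonal entries are $b_{jj}=j(j+1)-\lambda$, its superdiagonal entries are $b_{j-1,j}=2j\beta/(2j+1)$, and its subdiagonal entries are $b_{j+1,j}=(j+1)\beta/(2(2j+1))$. Deleting its last row and column leaves exactly $B_n$, since the entries of $B_n$ are given by the same formulas and do not depend on the size. This nesting property is the one structural fact to record first.

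Next, expand $\det B_{n+1}$ along its last row, which has only two nonzero entries: $b_{nn}=n(n+1)-\lambda$ and $b_{n,n-1}$. The first contributes $b_{nn}\det B_n$. For the second, the minor obtained by deleting row $n$ and column $n-1$ has a last column containing only $b_{n-1,n}$. Expanding that minor along its last column gives $b_{n-1,n}\det B_{n-1}$. Keeping track of signs, this yields the standard formula
\[
D_{n+1}=b_{nn}D_n-b_{n-1,n}\,b_{n,n-1}\,D_{n-1}.
\]
Here I use $D_0=1$ for the empty matrix. When $n=1$ the second term involves $D_0$, which is consistent with directly computing the $2\times2$ determinant $\det B_2$.

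It then remains to compute the product of off-diagonal entries. Taking $j=n$ in the superdiagonal formula gives $b_{n-1,n}=2n\beta/(2n+1)$. Taking $j=n-1$ in the subdiagonal formula gives $b_{n,n-1}=n\beta/(2(2n-1))$. Their product is
\[
\frac{2n\beta}{2n+1}\cdot\frac{n\beta}{2(2n-1)}=\frac{n^2\beta^2}{(2n-1)(2n+1)},
\]
which is exactly the coefficient in \eqref{eq:D-recurrence}. The initial values are immediate: $D_0=1$ by convention, and $B_1=(-\lambda)$, so $D_1=-\lambda$.

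No step is a genuine obstacle. The only place where care is needed is the index bookkeeping for the off-diagonal entries, since the superdiagonal is indexed by column $j$ with entry in row $j-1$, while the subdiagonal entry in row $n$ comes from column $j=n-1$. I would also check the case $n=1$ directly against $\det B_2=-\lambda(2-\lambda)-\tfrac{2\beta}{3}\cdot\tfrac{\beta}{2}$, whose correction term $\beta^2/3$ matches $1/(1\cdot3)$.
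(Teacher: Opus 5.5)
Your proof is correct and follows essentially the same route as the paper. You expand $\det B_{n+1}$ along its last row, reduce the resulting minor along its last column to $b_{n-1,n}D_{n-1}$, and read off the coefficient as the off-diagonal product $\frac{2n\beta}{2n+1}\cdot\frac{n\beta}{2(2n-1)}$; the nesting remark and the $n=1$ check are harmless additions.
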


\begin{proof}
The value \(D_1=-\lambda\) is immediate from the \(1\times 1\) matrix \(B_1=(-\lambda)\).
Now fix \(n\ge 1\). Since \(B_{n+1}\) is tridiagonal, its last row has exactly two
nonzero entries:
\[
\frac{n\beta}{2(2n-1)}
\quad\text{in the penultimate column},
\qquad
n(n+1)-\lambda
\quad\text{in the last column}.
\]
Expanding \(\det B_{n+1}\) along that row gives
\[
D_{n+1}
=
(n(n+1)-\lambda)D_n
-
\frac{n\beta}{2(2n-1)}\,M_{n+1,n},
\]
where \(M_{n+1,n}\) is the minor obtained by deleting the last row and the
penultimate column.

In this minor, the last column has a single nonzero entry
\[
\frac{2n\beta}{2n+1}
\]
in its last row. Expanding \(M_{n+1,n}\) along that last column therefore yields
\[
M_{n+1,n}=\frac{2n\beta}{2n+1}D_{n-1}.
\]
Substituting this into the previous identity, we obtain
\[
D_{n+1}
=
(n(n+1)-\lambda)D_n
-
\frac{n\beta}{2(2n-1)}\cdot \frac{2n\beta}{2n+1}D_{n-1},
\]
which is exactly \eqref{eq:D-recurrence}.
\end{proof}

\subsection{Normalization of the recurrence}

We now normalize $\widetilde u_n$ by the central binomial coefficients
\[
 C_n:=\binom{2n}{n}
 \qquad(n\ge 0),
\]
already introduced in \eqref{eq:Cn}. Define
\begin{equation}\label{eq:vn-def}
 v_n(x,\lambda):=\frac{\widetilde u_n(x,\lambda)}{C_n}
 \qquad(n\ge 0).
\end{equation}

\begin{lemma}\label{lem:v-recurrence}
The sequence $(v_n)_{n\ge 0}$ satisfies
\begin{equation}\label{eq:v-recurrence}
 v_{n+1}=(n(n+1)-\lambda)v_n
 +\frac{n^2}{(2n-1)(2n+1)}x\,v_{n-1}
 \qquad(n\ge 1),
\end{equation}
with initial values
\begin{equation}\label{eq:v-initial}
 v_0=1,
 \qquad
 v_1=-\lambda.
\end{equation}
\end{lemma}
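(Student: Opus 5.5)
The plan is a direct substitution. Write \(\widetilde u_n=C_nv_n\) in \eqref{eq:utilde-recurrence} and divide through by \(C_{n+1}\). The proof then comes down to two ratio identities for central binomial coefficients.

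First, the initial values. From \eqref{eq:utilde-recurrence-init}, \(v_0=\widetilde u_0/C_0=1\). Taking \(n=0\) in \eqref{eq:utilde-recurrence} gives \(\widetilde u_1=2(0-\lambda)\cdot 1+0=-2\lambda\), so \(v_1=-2\lambda/C_1=-\lambda\). This gives \eqref{eq:v-initial}. The case \(n=0\) is therefore handled separately, and this is why \eqref{eq:v-recurrence} is only claimed for \(n\ge1\), where \(v_{n-1}\) is defined.

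Next, fix \(n\ge1\). Substituting into \eqref{eq:utilde-recurrence} gives
\[
C_{n+1}v_{n+1}=\frac{4n+2}{n+1}\bigl(n(n+1)-\lambda\bigr)C_nv_n+\frac{4n}{n+1}x\,C_{n-1}v_{n-1}.
\]
The key ratio is \(C_{n+1}/C_n=\frac{(2n+2)(2n+1)}{(n+1)^2}=\frac{2(2n+1)}{n+1}=\frac{4n+2}{n+1}\). So dividing by \(C_{n+1}\) makes the coefficient of \(v_n\) exactly \(n(n+1)-\lambda\).

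For the last term, apply the same ratio twice: \(C_{n-1}/C_{n+1}=\frac{n}{2(2n-1)}\cdot\frac{n+1}{2(2n+1)}\). Multiplying by \(\frac{4n}{n+1}\) gives \(\frac{4n^2}{4(2n-1)(2n+1)}=\frac{n^2}{(2n-1)(2n+1)}\). This yields \eqref{eq:v-recurrence}.

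There is no real obstacle here. The only point that needs care is the bookkeeping of these two binomial ratios, together with keeping the indices \(n\ge1\) consistent so that \(C_{n-1}\) makes sense.

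The lemma is useful because, with \(x=-\beta^2\), \eqref{eq:v-recurrence} coincides with \eqref{eq:D-recurrence}, and \eqref{eq:v-initial} coincides with \eqref{eq:D-initial}. Hence \(v_n(-\beta^2,\lambda)=D_n\) for all \(n\ge0\), which is the identification the determinant comparison needs.
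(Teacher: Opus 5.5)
Your proposal is correct and matches the paper's proof essentially step for step. You substitute \(\widetilde u_n=C_nv_n\), divide by \(C_{n+1}\), use \(C_{n+1}/C_n=\frac{2(2n+1)}{n+1}\) and \(C_{n-1}/C_{n+1}=\frac{n(n+1)}{4(2n-1)(2n+1)}\), and check \(v_0=1\) and \(v_1=-\lambda\) directly; both ratio computations and the initial-value check are right.
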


\begin{proof}
Since
\[
 \frac{C_{n+1}}{C_n}=\frac{2(2n+1)}{n+1},
\]
we have
\[
 \frac{C_n}{C_{n+1}}=\frac{n+1}{2(2n+1)}.
\]
Dividing \eqref{eq:utilde-recurrence} by $C_{n+1}$ yields
\begin{align*}
 v_{n+1}
 &=\frac{4n+2}{n+1}\bigl(n(n+1)-\lambda\bigr)\frac{C_n}{C_{n+1}}v_n
 +\frac{4n}{n+1}x\frac{C_{n-1}}{C_{n+1}}v_{n-1} \\
 &=\bigl(n(n+1)-\lambda\bigr)v_n
 +\frac{4n}{n+1}x\frac{C_{n-1}}{C_{n+1}}v_{n-1}.
\end{align*}
A direct calculation gives
\[
 \frac{C_{n-1}}{C_{n+1}}=
 \frac{n(n+1)}{4(2n-1)(2n+1)},
\]
so \eqref{eq:v-recurrence} follows. Finally,
\[
 v_0=\frac{\widetilde u_0}{C_0}=1,
 \qquad
 v_1=\frac{\widetilde u_1}{C_1}=\frac{-2\lambda}{2}=-\lambda.
\]
\end{proof}

\begin{corollary}\label{cor:D=v}
For every $n\ge 0$,
\begin{equation}\label{eq:D-equals-v}
 D_n=v_n(-\beta^2,\lambda)=\frac{\widetilde u_n(-\beta^2,\lambda)}{C_n}.
\end{equation}
\end{corollary}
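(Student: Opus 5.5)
The proof is by induction on \(n\): both sides of \eqref{eq:D-equals-v} satisfy the same three-term recurrence with the same initial values. Lemma~\ref{lem:v-recurrence} is an identity in \(\Q[x,\lambda]\), so we may specialize it at \(x=-\beta^2\). This gives a sequence \(w_n:=v_n(-\beta^2,\lambda)\in\Q[\beta,\lambda]\subset K\) with \(w_0=1\), \(w_1=-\lambda\), and
\[
 w_{n+1}=(n(n+1)-\lambda)w_n-\frac{n^2\beta^2}{(2n-1)(2n+1)}w_{n-1}\qquad(n\ge1).
\]
The sign change \(x\mapsto-\beta^2\) turns the \(+x\) coefficient of \eqref{eq:v-recurrence} into precisely the \(-\beta^2\) coefficient of \eqref{eq:D-recurrence}. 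Lemma~\ref{lem:D-recurrence} gives \(D_n\) the same recurrence with the same initial values \(D_0=1\), \(D_1=-\lambda\).

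A strong induction then closes the argument. The cases \(n=0,1\) are the initial values. If \(D_{n}=w_{n}\) and \(D_{n-1}=w_{n-1}\) for some \(n\ge1\), the two recurrences give \(D_{n+1}=w_{n+1}\), since their coefficients agree term by term.

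The second equality in \eqref{eq:D-equals-v} follows directly from the definition \eqref{eq:vn-def}. Substituting \(x=-\beta^2\) commutes with dividing by the nonzero integer \(C_n\).

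There is no real obstacle. The only points to check are the bookkeeping of the sign under \(x=-\beta^2\) and the range of validity: both recurrences are stated for \(n\ge1\), so the induction must start from the two initial values rather than from \(n=0\) alone. Note also that \(D_0=1\) is a convention for the empty determinant, and this is consistent with the recurrence at \(n=1\): \(D_2=(2-\lambda)(-\lambda)-\tfrac{\beta^2}{3}\), which matches a direct computation of \(\det B_2\).
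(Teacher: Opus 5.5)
Your proposal is correct and is essentially the paper's proof: you specialize the recurrence of Lemma~\ref{lem:v-recurrence} at \(x=-\beta^2\), observe that it then matches term by term, including the initial values \(1\) and \(-\lambda\), the recurrence for \(D_n\) in Lemma~\ref{lem:D-recurrence}, and conclude by two-step induction. The second equality is, as you say, just the definition \eqref{eq:vn-def} specialized at \(x=-\beta^2\).
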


\begin{proof}
After the specialization $x=-\beta^2$, the recurrence \eqref{eq:v-recurrence} becomes
\[
 v_{n+1}(-\beta^2,\lambda)
 =(n(n+1)-\lambda)v_n(-\beta^2,\lambda)
 -\frac{n^2\beta^2}{(2n-1)(2n+1)}v_{n-1}(-\beta^2,\lambda)
\]
for $n\ge 1$, with the initial values \eqref{eq:v-initial}. By Lemma \ref{lem:D-recurrence}, this is exactly the recurrence and initial data satisfied by $D_n$. Hence $D_n=v_n(-\beta^2,\lambda)$ for all $n\ge 0$.
\end{proof}

\subsection{Comparison of the two determinants}

We now compare the monomial-basis matrix $M_n$ and the Legendre-basis matrix $N_n$.

Let
\[
 S_n\in \operatorname{GL}_{n+1}(K)
\]
be the change-of-basis matrix from $\mathcal C_n^{\mathrm{cod}}=(Q_0,\dots,Q_n)$ to the monomial basis $\mathcal B_n^{\mathrm{cod}}=(1,t,\dots,t^n)$, and let
\[
 T_n\in \operatorname{GL}_{n+1}(K)
\]
be the change-of-basis matrix from $\mathcal C_n^{\mathrm{dom}}=(Q_0,\dots,Q_{n-1},e)$ to $\mathcal B_n^{\mathrm{dom}}=(1,t,\dots,t^{n-1},e)$.

Because \(Q_j\) has degree \(j\) and leading coefficient \(C_j\), both \(S_n\) and \(T_n\)
are triangular. Their diagonals are
\[
(C_0,\dots,C_n)
\qquad\text{and}\qquad
(C_0,\dots,C_{n-1},1),
\]
respectively. Hence
\begin{equation}\label{eq:detS}
 \det S_n=\prod_{j=0}^n C_j,
\end{equation}
and
\begin{equation}\label{eq:detT}
 \det T_n=\prod_{j=0}^{n-1} C_j.
\end{equation}
Since $M_n$ and $N_n$ represent the same $K$-linear map $\Phi_{n,K}$ in different bases, they are related by
\begin{equation}\label{eq:M-SNT}
 M_n=S_nN_nT_n^{-1}.
\end{equation}
Taking determinants and using \eqref{eq:detS}--\eqref{eq:M-SNT} yields
\begin{equation}\label{eq:detM-first}
 \det M_n=\frac{\det S_n}{\det T_n}\det N_n=C_n\det N_n.
\end{equation}
Combining \eqref{eq:detM-first} with \eqref{eq:detN} and \eqref{eq:D-equals-v}, we obtain
\begin{equation}\label{eq:detM-final}
 \det M_n=-C_nD_n=-\widetilde u_n(-\beta^2,\lambda).
\end{equation}
By Lemma \ref{lem:M-integral}, the left-hand side belongs to $R=\Z[\beta,\lambda]$. Therefore
\[
 \widetilde u_n(-\beta^2,\lambda)\in \Z[\beta,\lambda]
 \qquad(n\ge 1).
\]
The same conclusion also holds for $n=0$, because $\widetilde u_0=1$.

Applying Lemma \ref{lem:beta-substitution}, we conclude that
\[
 \widetilde u_n(x,\lambda)\in \Z[x,\lambda]
 \qquad(n\ge 0).
\]
This proves Theorem \ref{thm:main}.

\section{BOCRS truncations, continued fractions, and rational slices}
\label{sec:continued-fractions}

For later use, we introduce the specialization
\[
\widehat u_n(a,\lambda):=\widetilde u_n(a^2,\lambda)
\qquad(n\ge -1).
\]
Thus
\begin{equation}\label{eq:uhat-recurrence}
\widehat u_{-1}=0,
\qquad
\widehat u_0=1,
\qquad
\widehat u_{n+1}
=
\frac{4n+2}{n+1}\bigl(n(n+1)-\lambda\bigr)\widehat u_n
+\frac{4n}{n+1}a^2\widehat u_{n-1}
\qquad(n\ge 0),
\end{equation}
and Theorem~\ref{thm:main} gives
\begin{equation}\label{eq:uhat-integrality}
\widehat u_n(a,\lambda)\in \Z[a^2,\lambda]
\qquad(n\ge 0).
\end{equation}
Moreover, it is immediate from \eqref{eq:uhat-recurrence} that
\begin{equation}\label{eq:uhat-degree-bounds}
\deg_{a^2}\widehat u_n\le \Bigl\lfloor\frac{n}{2}\Bigr\rfloor,
\qquad
\deg_{\lambda}\widehat u_n\le n
\qquad(n\ge 0).
\end{equation}

We next recall the BOCRS three-term recurrence in the form used for the decaying branch.

\begin{proposition}\label{prop:boq}
Let $a>0$, let $\lambda\in\mathbb R$, and let $(\xi_n)_{n\ge 0}$ be a positive decaying solution of
\begin{equation}\label{eq:BOCRS-xi-recurrence}
\lambda\xi_n
=
 n(n+1)\xi_n
 -a\left(\frac{n}{2n-1}\xi_{n-1}-\frac{n+1}{2n+3}\xi_{n+1}\right),
 \qquad \xi_{-1}=0.
\end{equation}
For $n\ge 0$ set
\begin{equation}\label{eq:beta-def}
\beta_n:=\frac{a^2(n+1)^2}{(2n+1)(2n+3)},
\end{equation}
and for $n\ge 1$ define
\begin{equation}\label{eq:q-def}
q_n:=\frac{an}{2n-1}\frac{\xi_{n-1}}{\xi_n}.
\end{equation}
Then
\begin{equation}\label{eq:q-riccati}
q_n=n(n+1)-\lambda+\frac{\beta_n}{q_{n+1}}
\qquad(n\ge 1).
\end{equation}
In particular, $q_n\to\infty$, and
\begin{equation}\label{eq:q1-cf}
q_1
=
2-\lambda+\cfrac{\beta_1}{6-\lambda+\cfrac{\beta_2}{12-\lambda+\cfrac{\beta_3}{20-\lambda+\ddots}}}.
\end{equation}
If, in addition, we normalize by \(\xi_0=1\), then \(\lambda>0\) and
\begin{equation}\label{eq:q1-lambda}
q_1=\frac{a^2}{3\lambda}.
\end{equation}
Hence
\begin{equation}\label{eq:lambda-cf}
\lambda=
\cfrac{a^2/3}{2-\lambda+\cfrac{4a^2/15}{6-\lambda+\cfrac{9a^2/35}{12-\lambda+\ddots}}}.
\end{equation}
\end{proposition}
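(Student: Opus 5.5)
The plan is to read every claim straight off the three-term recurrence \eqref{eq:BOCRS-xi-recurrence}: divide by \(\xi_n\), which is allowed since all \(\xi_n>0\), and express everything through the ratios \(\xi_{n\pm1}/\xi_n\).

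\textbf{The Riccati identity.} Fix \(n\ge1\) and divide \eqref{eq:BOCRS-xi-recurrence} by \(\xi_n\). Using \eqref{eq:q-def}, the term \(\frac{an}{2n-1}\frac{\xi_{n-1}}{\xi_n}\) is exactly \(q_n\), so
\[
q_n=n(n+1)-\lambda+\frac{a(n+1)}{2n+3}\,\frac{\xi_{n+1}}{\xi_n}.
\]
Next I invert the definition of \(q_{n+1}\), which gives \(\xi_{n+1}/\xi_n=\frac{a(n+1)}{(2n+1)q_{n+1}}\). Substituting, the last term becomes
\[
\frac{a^2(n+1)^2}{(2n+1)(2n+3)\,q_{n+1}}=\frac{\beta_n}{q_{n+1}},
\]
which is \eqref{eq:q-riccati}. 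Positivity of the \(\xi_n\) gives \(q_n>0\) and \(\beta_n>0\). Hence \(q_n\ge n(n+1)-\lambda\), and so \(q_n\to\infty\); this part needs nothing beyond positivity.

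\textbf{The continued fraction \eqref{eq:q1-cf}.} Iterating \eqref{eq:q-riccati} \(N\) times writes \(q_1\) as the finite continued fraction with tail \(q_{N+1}\) in the last slot. I must show this converges to the infinite fraction as \(N\to\infty\), and this is the main obstacle, mild as it is.

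I would handle it as follows. Choose \(N_0\) with \(N_0(N_0+1)>\lambda\), so that from level \(N_0\) onward all partial numerators \(\beta_n\) and partial denominators \(n(n+1)-\lambda\) are positive.
\begin{itemize}
\item For a continued fraction with positive entries, the tail value \(q_{N_0}\) lies between consecutive convergents, because truncating at an odd versus even depth brackets any positive tail.
\item The entries satisfy \(\sum_n (n(n+1)-\lambda)=\infty\), and \(\beta_n\to a^2/4\) is bounded. The Seidel--Stern criterion (or directly the estimate \(\prod\beta_n/\prod q_n\to0\) on the convergent differences) then shows the convergents converge. So \(q_{N_0}\) equals the infinite tail fraction.
\item Finally, \(q_1\) is obtained from \(q_{N_0}\) by composing finitely many Möbius maps \(w\mapsto k(k+1)-\lambda+\beta_k/w\). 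Each is continuous at the relevant points, since \(q_k>0\) is finite. This yields \eqref{eq:q1-cf}.
\end{itemize}

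\textbf{The normalization.} Take \(n=0\) in \eqref{eq:BOCRS-xi-recurrence}, using \(\xi_{-1}=0\) and \(\xi_0=1\). This gives \(\lambda=\frac a3\xi_1\), which is \(>0\) since \(a>0\) and \(\xi_1>0\). Then \(q_1=a\,\xi_0/\xi_1=a\cdot\frac{a}{3\lambda}=\frac{a^2}{3\lambda}\), which is \eqref{eq:q1-lambda}. Inverting, \(\lambda=\frac{a^2/3}{q_1}\). Inserting \eqref{eq:q1-cf} and computing \(\beta_1=4a^2/15\), \(\beta_2=9a^2/35\), and so on, gives \eqref{eq:lambda-cf}.
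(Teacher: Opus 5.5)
Your proposal is correct and follows the paper's proof. Like the paper, you divide by $\xi_n$ and substitute the definition of $q_{n+1}$ to get the Riccati identity, use $q_{n+1}>0$ to get $q_n\ge n(n+1)-\lambda$, and read off $\lambda=\frac a3\xi_1>0$ and $q_1=a^2/(3\lambda)$ from the $n=0$ equation. The one addition is that you actually justify convergence of the infinite continued fraction to $q_1$: you bracket $q_{N_0}$ between consecutive convergents on the tail where $n(n+1)>\lambda$, then use continuity of the finitely many initial M\"obius maps. The paper passes over this step with ``iterating therefore gives'', and your argument fills it in correctly.
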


\begin{proof}
Divide \eqref{eq:BOCRS-xi-recurrence} by $\xi_n$ to get
\[
\lambda
=
n(n+1)-q_n+\frac{a(n+1)}{2n+3}\frac{\xi_{n+1}}{\xi_n}.
\]
By the definition of $q_{n+1}$,
\[
\frac{a(n+1)}{2n+3}\frac{\xi_{n+1}}{\xi_n}
=
\frac{a^2(n+1)^2}{(2n+1)(2n+3)}\frac{1}{q_{n+1}}
=
\frac{\beta_n}{q_{n+1}},
\]
which gives \eqref{eq:q-riccati}. Since $q_{n+1}>0$, we have
\[
q_n\ge n(n+1)-\lambda,
\]
so $q_n\to\infty$. Iterating \eqref{eq:q-riccati} therefore gives the continued fraction \eqref{eq:q1-cf}. Finally, the equation \eqref{eq:BOCRS-xi-recurrence} at \(n=0\) reads
\[
\lambda\xi_0=\frac{a}{3}\xi_1.
\]
Since \(a>0\) and \(\xi_0,\xi_1>0\), this shows that \(\lambda>0\).
With the normalization \(\xi_0=1\), we therefore get
\[
q_1=a\frac{\xi_0}{\xi_1}=\frac{a^2}{3\lambda},
\]
which is \eqref{eq:q1-lambda}; substituting this into \eqref{eq:q1-cf} gives
\eqref{eq:lambda-cf}.
\end{proof}

\begin{remark}\label{rem:HB-cf-relation}
For the H\"ormander--Bernhardsson branch, the rescaled eigenfunction
\[
h(z):=\Phi\!\left(\frac{z}{2C}\right)
\]
corresponds to the BOCRS decaying branch for \(L_{1,a}\) at the normalized spectral pair
\[
a_*=\frac{\pi}{4C},
\qquad
\lambda_*=-\frac{L_\tau(1)}{2C}.
\]
Hence Proposition~\ref{prop:boq} applies, and substituting
\((a,\lambda)=(a_*,\lambda_*)\) into \eqref{eq:lambda-cf} gives
\begin{equation}\label{eq:HB-continued-fraction}
-\frac{L_\tau(1)}{2C}
=
\cfrac{\pi^2/(48C^2)}
{\,2+\frac{L_\tau(1)}{2C}
+\cfrac{\pi^2/(60C^2)}
{\,6+\frac{L_\tau(1)}{2C}
+\cfrac{9\pi^2/(560C^2)}
{\,12+\frac{L_\tau(1)}{2C}+\ddots}}}.
\end{equation}
Equivalently,
\begin{equation}\label{eq:HB-continued-fraction-Ltau}
L_\tau(1)
=
-\cfrac{\pi^2/(24C)}
{\,2+\frac{L_\tau(1)}{2C}
+\cfrac{\pi^2/(60C^2)}
{\,6+\frac{L_\tau(1)}{2C}
+\cfrac{9\pi^2/(560C^2)}
{\,12+\frac{L_\tau(1)}{2C}+\ddots}}}.
\end{equation}

It is sometimes convenient to rewrite this in terms of the normalized quantities
\[
X:=\frac{C}{\pi},
\qquad
Y:=\frac{L_\tau(1)}{C}.
\]
Then \eqref{eq:HB-continued-fraction} becomes
\[
-\frac{Y}{2}
=
\cfrac{1/(48X^2)}
{\,2+\frac{Y}{2}
+\cfrac{1/(60X^2)}
{\,6+\frac{Y}{2}
+\cfrac{9/(560X^2)}
{\,12+\frac{Y}{2}+\ddots}}}.
\]
Thus the BOCRS decaying-branch continued fraction furnishes an implicit relation between the
two normalized constants \(C/\pi\) and \(L_\tau(1)/C\). 
\end{remark}

For each integer $N\ge 1$, define the depth-$N$ truncants by
\begin{equation}\label{eq:q-truncants}
q_N^{(N)}:=N(N+1)-\lambda,
\qquad
q_n^{(N)}:=n(n+1)-\lambda+\frac{\beta_n}{q_{n+1}^{(N)}}
\quad(1\le n\le N-1).
\end{equation}
Thus $q_1^{(N)}$ is the finite continued fraction
\begin{equation}\label{eq:q1-truncant}
q_1^{(N)}
=
2-\lambda+
\cfrac{\beta_1}{6-\lambda+
\cfrac{\beta_2}{12-\lambda+
\cfrac{\beta_3}{\ddots+
\cfrac{\beta_{N-1}}{N(N+1)-\lambda}}}}.
\end{equation}
Write
\begin{equation}\label{eq:K-def}
q_n^{(N)}=\frac{K_n^{(N)}}{K_{n+1}^{(N)}},
\end{equation}
where
\begin{equation}\label{eq:K-recurrence}
K_{N+1}^{(N)}=1,
\qquad
K_N^{(N)}=N(N+1)-\lambda,
\end{equation}
and
\begin{equation}\label{eq:K-recursive}
K_n^{(N)}=(n(n+1)-\lambda)K_{n+1}^{(N)}+\beta_n K_{n+2}^{(N)}
\qquad(1\le n\le N-1).
\end{equation}

Accordingly, whenever we use \(q_n^{(N)}\) algebraically, we regard it as the rational
function
\[
q_n^{(N)}=\frac{K_n^{(N)}}{K_{n+1}^{(N)}}\in \Q(a,\lambda).
\]
Its continued-fraction interpretation at a point \((a,\lambda)\) requires the successive
denominators to be nonzero.

\begin{proposition}\label{prop:truncants-determinants}
Define
\[
T_0(a,\lambda):=(-\lambda),
\]
and, for \(N\ge 1\),
\begin{equation}\label{eq:TN-def}
T_N(a,\lambda):=
\begin{pmatrix}
-\lambda & \dfrac{a}{3} & 0 & \cdots & 0 \\
-a & 2-\lambda & \dfrac{2a}{5} & \ddots & \vdots \\
0 & -\dfrac{2a}{3} & 6-\lambda & \ddots & 0 \\
\vdots & \ddots & \ddots & \ddots & \dfrac{Na}{2N+1} \\
0 & \cdots & 0 & -\dfrac{Na}{2N-1} & N(N+1)-\lambda
\end{pmatrix}.
\end{equation}
For \(N\ge 1\) and \(0\le n\le N+1\), let \(\Delta_n^{(N)}\) denote the determinant of the
trailing principal submatrix of \(T_N(a,\lambda)\) obtained by deleting the first \(n\) rows
and columns; in particular,
\[
\Delta_0^{(N)}=\det T_N(a,\lambda),
\qquad
\Delta_{N+1}^{(N)}:=1.
\]
Then the following hold.
\begin{enumerate}[label=\textup{(\roman*)}]
\item
One has
\[
\Delta_n^{(N)}=K_n^{(N)}
\qquad(1\le n\le N+1).
\]
\item
One has
\begin{equation}\label{eq:detT-K}
\det T_N(a,\lambda)=-\lambda K_1^{(N)}+\frac{a^2}{3}K_2^{(N)}.
\end{equation}
Equivalently,
\begin{equation}\label{eq:truncation-rational-identity}
\lambda q_1^{(N)}-\frac{a^2}{3}
=
-\frac{\det T_N(a,\lambda)}{K_2^{(N)}}
\end{equation}
as an identity in \(\Q(a,\lambda)\). In particular, on the locus \(K_2^{(N)}\neq 0\), the
equation
\begin{equation}\label{eq:truncation-eqn}
\lambda q_1^{(N)}=\frac{a^2}{3}
\end{equation}
is equivalent to
\begin{equation}\label{eq:truncation-det}
\det T_N(a,\lambda)=0.
\end{equation}
\end{enumerate}
\end{proposition}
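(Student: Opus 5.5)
Both parts reduce to cofactor expansion of tridiagonal determinants, which I will do from the bottom-right corner for part (i) and from the top-left corner for part (ii). The key observation concerns the matrix $T_N(a,\lambda)$. Its $(n-1,n)$ and $(n,n-1)$ entries, indexed from $0$, are $\frac{na}{2n+1}$ and $-\frac{na}{2n-1}$. Their product is $-\frac{n^2a^2}{(2n-1)(2n+1)}=-\beta_{n-1}$, using the definition \eqref{eq:beta-def}. Every off-diagonal pair therefore contributes $+\beta_{n-1}$ to the continuant recurrence, because the minus sign in the continuant expansion cancels the minus sign in the product. The diagonal entry in position $n$ is $n(n+1)-\lambda$.

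For (i), I will argue by descending induction on $n$ from $N+1$ down to $1$. The trailing submatrix obtained by deleting the first $n$ rows and columns has first diagonal entry $n(n+1)-\lambda$. Its first off-diagonal pair sits between indices $n$ and $n+1$ and has product $-\beta_n$. Expanding along the first row, then expanding the resulting minor along its first column, gives the recurrence
\[
\Delta_n^{(N)}=(n(n+1)-\lambda)\Delta_{n+1}^{(N)}+\beta_n\Delta_{n+2}^{(N)}\qquad(0\le n\le N-1).
\]
The bookkeeping is the same as in Lemma~\ref{lem:D-recurrence}, but performed from the top-left corner. The base cases are $\Delta_{N+1}^{(N)}=1=K_{N+1}^{(N)}$ and $\Delta_N^{(N)}=N(N+1)-\lambda=K_N^{(N)}$. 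For $1\le n\le N-1$ the recurrence above is identical to \eqref{eq:K-recursive}, so $\Delta_n^{(N)}=K_n^{(N)}$ follows by induction.

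For (ii), I will apply the same recurrence at $n=0$. Since $\beta_0=a^2/3$, this gives $\det T_N=-\lambda\Delta_1^{(N)}+\frac{a^2}{3}\Delta_2^{(N)}$, and substituting (i) yields \eqref{eq:detT-K}. As a check, the case $N=1$ gives $\lambda^2-2\lambda+a^2/3$, and this agrees with the direct computation $(-\lambda)(2-\lambda)+a\cdot a/3$. To obtain \eqref{eq:truncation-rational-identity}, I will divide \eqref{eq:detT-K} by $-K_2^{(N)}$ and use $q_1^{(N)}=K_1^{(N)}/K_2^{(N)}$ from \eqref{eq:K-def}. This is legitimate in $\Q(a,\lambda)$ because $K_2^{(N)}$ is a nonzero polynomial: it is monic of degree $N-1$ in $-\lambda$, or equal to $1$ when $N=1$. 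The equivalence of \eqref{eq:truncation-eqn} and \eqref{eq:truncation-det} on the locus $K_2^{(N)}\neq0$ is then immediate.

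The only real obstacle is sign and index bookkeeping: I must check that the off-diagonal product in $T_N$ equals $-\beta_{n-1}$ exactly, and not $-\beta_n$, by matching $\frac{na}{2n+1}\cdot\frac{na}{2n-1}$ with $\beta_{n-1}=\frac{a^2n^2}{(2n-1)(2n+1)}$. I also need to handle the edge case $N=1$ separately, where $K_2^{(1)}=\Delta_2^{(1)}=1$ holds by the stated convention.
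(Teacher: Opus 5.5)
Your proof is correct and follows the paper's argument: you expand each trailing principal submatrix along its first row to get $\Delta_n^{(N)}=(n(n+1)-\lambda)\Delta_{n+1}^{(N)}+\beta_n\Delta_{n+2}^{(N)}$ for $0\le n\le N-1$, match this with the recursion defining $K_n^{(N)}$ to get (i), and take $n=0$ with $\beta_0=a^2/3$ to get (ii). You also observe that $K_2^{(N)}$ is a nonzero polynomial, with leading term $(-\lambda)^{N-1}$, which justifies dividing by it in $\Q(a,\lambda)$; the paper leaves this point implicit.
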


\begin{proof}
For \(0\le n\le N-1\), first-row expansion of the trailing principal submatrix starting at
index \(n\) gives
\[
\Delta_n^{(N)}
=
(n(n+1)-\lambda)\Delta_{n+1}^{(N)}
-\frac{a(n+1)}{2n+3}\left(-\frac{a(n+1)}{2n+1}\right)\Delta_{n+2}^{(N)}.
\]
Thus
\[
\Delta_n^{(N)}
=
(n(n+1)-\lambda)\Delta_{n+1}^{(N)}
+\beta_n\Delta_{n+2}^{(N)}
\qquad(0\le n\le N-1),
\]
with terminal values
\[
\Delta_N^{(N)}=N(N+1)-\lambda,
\qquad
\Delta_{N+1}^{(N)}=1.
\]
Comparing this with \eqref{eq:K-recurrence}--\eqref{eq:K-recursive}, we obtain
\[
\Delta_n^{(N)}=K_n^{(N)}
\qquad(1\le n\le N+1),
\]
which proves \textup{(i)}.

Taking \(n=0\) in the same expansion gives
\[
\det T_N(a,\lambda)
=
-\lambda\Delta_1^{(N)}+\beta_0\Delta_2^{(N)}
=
-\lambda K_1^{(N)}+\frac{a^2}{3}K_2^{(N)},
\]
which is \eqref{eq:detT-K}. Since \(q_1^{(N)}=K_1^{(N)}/K_2^{(N)}\) in \(\Q(a,\lambda)\),
\[
\lambda q_1^{(N)}-\frac{a^2}{3}
=
\frac{\lambda K_1^{(N)}-\frac{a^2}{3}K_2^{(N)}}{K_2^{(N)}}
=
-\frac{\det T_N(a,\lambda)}{K_2^{(N)}},
\]
which is \eqref{eq:truncation-rational-identity}. The pointwise equivalence of
\eqref{eq:truncation-eqn} and \eqref{eq:truncation-det} on the locus \(K_2^{(N)}\neq 0\)
is now immediate. This proves \textup{(ii)}.

\end{proof}

\begin{remark}\label{rem:truncation-domain}
The condition \(K_2^{(N)}\neq 0\) in Proposition~\ref{prop:truncants-determinants}\textup{(ii)}
is necessary. For example, when \(N=3\), \(\lambda=0\), and \(a=\pm 2i\sqrt{70}\), one has
\[
K_2^{(3)}=(6-\lambda)(12-\lambda)+\frac{9a^2}{35}=0,
\qquad
K_1^{(3)}=-896\neq 0,
\]
so \(q_1^{(3)}=K_1^{(3)}/K_2^{(3)}\) is undefined. Nevertheless,
\[
\det T_3(a,\lambda)
=
-\lambda K_1^{(3)}+\frac{a^2}{3}K_2^{(3)}
=
0.
\]
Thus the determinantal equation may hold at points where the continued-fraction truncation
equation is not defined.
\end{remark}

\begin{theorem}[Exact finite-compression identity]\label{thm:truncation-identity}
For every \(N\ge 0\),
\[
\widehat u_{N+1}(a,\lambda)=C_{N+1}\det T_N(a,\lambda).
\]
Consequently, for every \(N\ge 0\),
\[
\widehat u_{N+1}(a,\lambda)=0
\]
if and only if
\[
\det T_N(a,\lambda)=0.
\]
If \(N\ge 1\) and \(K_2^{(N)}(a,\lambda)\neq 0\), then these equations are also equivalent to
\[
\lambda q_1^{(N)}=\frac{a^2}{3}.
\]
\end{theorem}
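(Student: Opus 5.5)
The plan is to show directly that the leading principal minors of the matrices \(T_N(a,\lambda)\) satisfy the normalized recurrence of Lemma~\ref{lem:v-recurrence} with \(x=a^2\). Once that is done, the identity \(\widehat u_{N+1}=C_{N+1}\det T_N\) follows from the definition \(v_n=\widetilde u_n/C_n\).

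Set \(E_N:=\det T_N(a,\lambda)\) for \(N\ge 0\), and put \(E_{-1}:=1\). Note that \(T_{N-1}\) is the leading \(N\times N\) principal submatrix of \(T_N\). The last row of \(T_N\) has two nonzero entries: \(-\frac{Na}{2N-1}\) in column \(N-1\) and \(N(N+1)-\lambda\) on the diagonal. The last column has \(\frac{Na}{2N+1}\) just above the diagonal. The standard continuant expansion along the last row, followed by expansion of the resulting minor along its last column, gives for \(N\ge 1\)
\[
E_N=(N(N+1)-\lambda)E_{N-1}-\frac{Na}{2N+1}\Bigl(-\frac{Na}{2N-1}\Bigr)E_{N-2}
=(N(N+1)-\lambda)E_{N-1}+\frac{N^2a^2}{(2N-1)(2N+1)}E_{N-2}.
\]
This is the same computation as in Lemma~\ref{lem:D-recurrence}, now with the opposite sign on the product of off-diagonal entries. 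For \(N=1\) I will check the formula by hand: \(E_1=(2-\lambda)(-\lambda)+\frac{a^2}{3}\), which matches the formula with \(E_{-1}=1\).

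Next I compare with Lemma~\ref{lem:v-recurrence}, shifting the index by one. Putting \(x=a^2\) and \(n=N\) there gives
\[
v_{N+1}=(N(N+1)-\lambda)v_N+\frac{N^2a^2}{(2N-1)(2N+1)}v_{N-1},
\]
with \(v_0=1=E_{-1}\) and \(v_1=-\lambda=E_0\). So \(E_N\) and \(v_{N+1}(a^2,\lambda)\) satisfy the same second-order recurrence with the same two initial values. By induction, \(E_N=v_{N+1}(a^2,\lambda)=\widehat u_{N+1}(a,\lambda)/C_{N+1}\), which is the claimed identity.

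The two equivalences are then formal. Since \(C_{N+1}\) is a nonzero integer, \(\widehat u_{N+1}(a,\lambda)=0\) holds if and only if \(\det T_N(a,\lambda)=0\). On the locus \(K_2^{(N)}\neq0\), Proposition~\ref{prop:truncants-determinants}(ii) says that \(\det T_N=0\) is equivalent to \(\lambda q_1^{(N)}=a^2/3\).

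There is no real obstacle here. The only point needing care is the index and sign bookkeeping: the product of the off-diagonal pair in \(T_N\) is \(-N^2a^2/((2N-1)(2N+1))\), which produces a \(+\) sign in the recurrence and so matches \(x=+a^2\) rather than the \(-\beta^2\) of Section~\ref{sec:main-proof}. The other is aligning \(T_N\), of size \(N+1\), with \(v_{N+1}\).
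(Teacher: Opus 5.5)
Your proposal is correct and takes essentially the same route as the paper. You derive the continuant recurrence for $E_N=\det T_N(a,\lambda)$ by last-row expansion, with $E_{-1}=1$ and $E_0=-\lambda$, match it after an index shift to the BOCRS recurrence, and then read off the two equivalences. The only cosmetic difference is the direction of the matching: the paper sets $U_n:=C_nE_{n-1}$ and checks directly that it satisfies the unnormalized recurrence for $\widehat u_n$, whereas you reuse Lemma~\ref{lem:v-recurrence} at $x=a^2$ and compare $E_N$ with $v_{N+1}(a^2,\lambda)$, which amounts to the same thing.
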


\begin{proof}
Set
\[
E_{-1}:=1,
\qquad
E_N:=\det T_N(a,\lambda)
\qquad(N\ge 0).
\]
Expanding along the last row of \(T_N(a,\lambda)\), we get
\[
E_N=(N(N+1)-\lambda)E_{N-1}
+\frac{N^2a^2}{(2N-1)(2N+1)}E_{N-2}
\qquad(N\ge 1),
\]
with
\[
E_0=-\lambda.
\]
Now define
\[
U_n:=C_nE_{n-1}
\qquad(n\ge 0).
\]
Then \(U_0=1\) and \(U_1=-2\lambda\). Moreover, for \(n\ge 1\),
\begin{align*}
U_{n+1}
&=C_{n+1}E_n \\
&=\frac{C_{n+1}}{C_n}\bigl(n(n+1)-\lambda\bigr)U_n
+\frac{C_{n+1}}{C_{n-1}}\frac{n^2a^2}{(2n-1)(2n+1)}U_{n-1} \\
&=\frac{2(2n+1)}{n+1}\bigl(n(n+1)-\lambda\bigr)U_n
+\frac{4n}{n+1}a^2U_{n-1} \\
&=\frac{4n+2}{n+1}\bigl(n(n+1)-\lambda\bigr)U_n
+\frac{4n}{n+1}a^2U_{n-1}.
\end{align*}
Hence \((U_n)_{n\ge 0}\) satisfies the same recurrence and the same initial conditions as
\((\widehat u_n(a,\lambda))_{n\ge 0}\). Therefore
\[
U_n=\widehat u_n(a,\lambda)
\qquad(n\ge 0).
\]
Therefore
\[
U_n=\widehat u_n(a,\lambda)
\qquad(n\ge 0),
\]
so
\[
\widehat u_{N+1}(a,\lambda)=C_{N+1}\det T_N(a,\lambda)
\qquad(N\ge 0).
\]
Since \(C_{N+1}\neq 0\), this immediately yields
\[
\widehat u_{N+1}(a,\lambda)=0
\quad\Longleftrightarrow\quad
\det T_N(a,\lambda)=0.
\]
If \(N\ge 1\) and \(K_2^{(N)}(a,\lambda)\neq 0\), then
Proposition~\ref{prop:truncants-determinants} shows that these are further equivalent to
\[
\lambda q_1^{(N)}=\frac{a^2}{3}.
\]
This completes the proof of Theorem~\ref{thm:truncation-identity}.
\end{proof}

We now turn from exact integrality to a first arithmetic consequence. The point is that
Theorem~\ref{thm:main}, together with the elementary degree bounds
\eqref{eq:uhat-degree-bounds}, produces integral multiples of the coefficients
\(\widehat u_n(a,\lambda)\) whenever \(a^2\) and \(\lambda\) are rational. If, in addition,
the associated even generating series has exponential type at most \(2a\), then Cauchy's
estimate forces those integral multiples to decay superexponentially. The only way an integer
sequence can behave in this fashion is that it eventually vanishes. This yields the following
rational-slice obstruction.

\begin{theorem}\label{thm:rational-slice}
Let $a>0$ and $\lambda\in\mathbb C$. Assume that the power series
\begin{equation}\label{eq:F-def-rational-slice}
F(z):=\sum_{n=0}^{\infty}\widehat u_n(a,\lambda)z^{2n}
\end{equation}
defines an entire nonpolynomial function of exponential type at most $2a$. Then $a^2$ and $\lambda$ are not both rational.
\end{theorem}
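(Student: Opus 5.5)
Argue by contradiction: suppose $a^2=p/q$ and $\lambda=r/s$ are both rational, with a common denominator $d\ge1$ such that $da^2\in\Z$ and $d\lambda\in\Z$. By Theorem~\ref{thm:main}, $\widehat u_n(a,\lambda)=\widetilde u_n(a^2,\lambda)$ is an integer polynomial in $a^2$ and $\lambda$. The degree bounds \eqref{eq:uhat-degree-bounds} say each monomial $a^{2r}\lambda^s$ has $r\le\lfloor n/2\rfloor$ and $s\le n$. Hence
\[
N_n:=d^{\,2n}\,\widehat u_n(a,\lambda)\in\Z,
\]
since each monomial needs at most $d^{r+s}\le d^{2n}$. (The sharper weighted bound $2r+s\le n$ from Section~\ref{sec:weighted-arithmetic} would allow $d^n$, but the crude bound suffices.)

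Next use the growth hypothesis. Since $F$ has exponential type at most $2a$, for every $\varepsilon>0$ there is a constant $A_\varepsilon$ with $|F(z)|\le A_\varepsilon e^{(2a+\varepsilon)|z|}$. Cauchy's estimate on the circle $|z|=R$ gives
\[
|\widehat u_n|\le A_\varepsilon e^{(2a+\varepsilon)R}R^{-2n}.
\]
Optimizing at $R=2n/(2a+\varepsilon)$ yields
\[
|\widehat u_n|\le A_\varepsilon\Bigl(\frac{e(2a+\varepsilon)}{2n}\Bigr)^{2n},
\]
which is the standard coefficient bound for entire functions of finite type. Therefore
\[
|N_n|\le A_\varepsilon\Bigl(\frac{e d^{}(2a+\varepsilon)}{2n}\Bigr)^{2n}\to0 .
\]
Because $N_n$ is an integer, $N_n=0$ for all $n\ge n_0$, so $\widehat u_n(a,\lambda)=0$ for all large $n$. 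Then $F$ is a polynomial, contradicting the hypothesis. The exact value of the type is irrelevant here, since any finite order-one growth beats a geometric factor $d^{2n}$. The hypothesis $a>0$ only serves to make the type condition meaningful.

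One subtlety needs checking: that "nonpolynomial" really contradicts eventual vanishing. This is immediate, because eventual vanishing of the coefficients makes the series a polynomial. So the main obstacle is just the denominator bookkeeping in the first step. There I would verify the degree bounds \eqref{eq:uhat-degree-bounds} by induction directly from \eqref{eq:uhat-recurrence}:
\begin{itemize}
\item the $\lambda$-degree rises by at most $1$ per step;
\item the $a^2$-degree of $\widehat u_{n+1}$ is at most $\max(\lfloor n/2\rfloor,\lfloor (n-1)/2\rfloor+1)=\lfloor (n+1)/2\rfloor$.
\end{itemize}
This justifies clearing denominators with a factor $d^{2n}$ uniformly.
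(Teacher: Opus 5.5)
Your argument is correct and is essentially the paper's proof: you clear denominators using Theorem~\ref{thm:main} and the degree bounds \eqref{eq:uhat-degree-bounds}, with your factor $d^{2n}$ playing the role of the paper's $(sq)^n$, and then Cauchy's estimate forces the resulting integers to tend to $0$, hence to vanish eventually, so $F$ would be a polynomial. As your closing remark about the type suggests, entireness alone would already suffice here, since $|\widehat u_n(a,\lambda)|^{1/(2n)}\to 0$ beats any geometric factor $d^{2n}$.
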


\begin{proof}
Suppose, to the contrary, that
\[
a^2=\frac{r}{s}\in\Q,
\qquad
\lambda=\frac{p}{q}\in\Q,
\]
with $r,s,p,q\in\Z$ and $s,q\ge 1$. By \eqref{eq:uhat-integrality} and \eqref{eq:uhat-degree-bounds}, we have
\[
(sq)^n\widehat u_n(a,\lambda)\in\Z
\qquad(n\ge 0).
\]
Fix $\varepsilon\in(0,a)$. Since $F$ is entire of exponential type at most $2a$, there exists a constant $A_{\varepsilon}>0$ such that
\[
|F(z)|\le A_{\varepsilon}e^{2(a+\varepsilon)|z|}
\qquad(z\in\mathbb C).
\]
By Cauchy's estimate, for every $R>0$,
\[
|\widehat u_n(a,\lambda)|
\le \frac{\sup_{|z|=R}|F(z)|}{R^{2n}}
\le A_{\varepsilon}\frac{e^{2(a+\varepsilon)R}}{R^{2n}}.
\]
Choosing
\[
R=\frac{n}{a+\varepsilon}
\]
gives
\[
|\widehat u_n(a,\lambda)|
\le A_{\varepsilon}\left(\frac{e(a+\varepsilon)}{n}\right)^{2n}.
\]
Therefore
\[
(sq)^n|\widehat u_n(a,\lambda)|
\le A_{\varepsilon}\left(\frac{sq\,e^2(a+\varepsilon)^2}{n^2}\right)^n
\longrightarrow 0.
\]
Since $(sq)^n\widehat u_n(a,\lambda)$ is an integer for every $n$, it follows that
\[
(sq)^n\widehat u_n(a,\lambda)=0
\]
for all sufficiently large $n$. Hence $\widehat u_n(a,\lambda)=0$ for all sufficiently large $n$, and so the power series \eqref{eq:F-def-rational-slice} is a polynomial, contrary to assumption.
\end{proof}

We now apply Theorem~\ref{thm:rational-slice} to the BOCRS decaying branch. Since BOCRS
formulate the coefficient recurrence in the \(L_{1,b}\)-normalization, whereas the decaying
branch is naturally written in the \(L_{a,1}\)-normalization, we first pass to the rescaled
eigenfunction \(g(z)=f(az)\). This places the even product in exactly the framework of
Theorem~\ref{thm:rational-slice}.

\begin{corollary}\label{cor:BOCRS-rational-slice}
Let \(f\) be the normalized eigenfunction corresponding to the decaying BOCRS branch
with parameter \(a>0\) and eigenvalue \(\lambda\). Assume that \(f(z)f(-z)\) is not a
polynomial. Then \(a^2\) and \(\lambda\) are not both rational. In particular, if \(a^2\in\Q\),
then \(\lambda\notin\Q\).
\end{corollary}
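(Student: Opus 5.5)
The plan is to reduce Corollary~\ref{cor:BOCRS-rational-slice} to Theorem~\ref{thm:rational-slice}. This requires two facts: an identification of the Taylor coefficients of the relevant even product with \(\widehat u_n(a,\lambda)\), and an exponential-type bound of \(2a\).

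\textbf{Step 1: rescale.} Let \(f\) be the decaying-branch eigenfunction for the operator in the \(L_{a,1}\)-normalization, with eigenvalue \(\lambda\) and \(f(0)=1\). Put \(g(z):=f(az)\). A direct chain-rule computation with \(z\mapsto az\) shows that \(z^2g''+(2z-1)g'+a^2z^2g\) is obtained from \(L_{a,1}f\) by rescaling the variable. Hence \(g\) is a \(\lambda\)-eigenfunction of \(L_{1,a}\), that is, \(b=a\). By the BOCRS observation recalled in the introduction, the even Taylor coefficients of \(g(z)g(-z)\) satisfy recurrence \eqref{eq:u-recurrence} with \(b=a\) and \(u_0=g(0)^2=1\). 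By uniqueness of the recursion, these coefficients equal \(\widehat u_n(a,\lambda)\). So
\[
F(z):=g(z)g(-z)=\sum_{n\ge0}\widehat u_n(a,\lambda)z^{2n}.
\]
This series is entire because \(f\) is entire.

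\textbf{Step 2: exponential type.} I would check the type of \(f\) from the decaying-branch construction. There \(f\) is expressed through the Legendre-type expansion with coefficients \(\xi_n\) from Proposition~\ref{prop:boq}, equivalently as a Fourier transform of a function supported on an interval. From this I expect \(f\) to have exponential type at most \(1\) in the \(L_{a,1}\)-normalization; the normalization \(b^2z^2f\) with \(b=1\) suggests frequencies in \([-1,1]\). Then \(g\) has type at most \(a\), and \(F(z)=g(z)g(-z)\) has type at most \(2a\). This is the step I expect to be the main obstacle, because it depends on the precise BOCRS description of the decaying branch. If that description does not directly give type \(\le 1\), I would try the following. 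Since \(\xi_n\) decays superexponentially (because \(q_n\to\infty\) like \(n^2\)), the Legendre-transform representation \(f(z)=\int_{-1}^1 w(t)e^{izt}\,dt\), with \(w=\sum\xi_nP_n\) converging on \([-1,1]\), gives type \(\le 1\) immediately by Paley--Wiener.

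\textbf{Step 3: conclude.} Since \(F(z)=f(az)f(-az)\) is nonpolynomial exactly when \(f(z)f(-z)\) is, all hypotheses of Theorem~\ref{thm:rational-slice} hold. Therefore \(a^2\) and \(\lambda\) are not both rational. The final clause follows immediately: if \(a^2\in\Q\), then \(\lambda\notin\Q\).
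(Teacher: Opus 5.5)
Your proposal is correct and follows the paper's proof step for step: you rescale to $g(z)=f(az)$ so that $g$ is a normalized $\lambda$-eigenfunction of $L_{1,a}$, identify $g(z)g(-z)=\sum_{n\ge0}\widehat u_n(a,\lambda)z^{2n}$ through the BOCRS recurrence, bound the type by $2a$, transfer nonpolynomiality, and apply Theorem~\ref{thm:rational-slice}. The step you flag as the main obstacle is settled in the paper simply by citing BOCRS Corollary~7.2, which says $g$ has type at most $a$; your Paley--Wiener fallback also works once the Fourier--Legendre representation of the decaying-branch eigenfunction is granted, except that with the kernel $e^{izt}$ the density should be $\sum_n i^n\xi_nP_n$ rather than $\sum_n\xi_nP_n$, which is harmless for the type bound because only $\sum_n|\xi_n|<\infty$ is used.
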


\begin{proof}
By definition of the decaying BOCRS branch, \(f\) is the normalized \(\lambda\)-eigenfunction
of \(L_{a,1}\). Define
\[
g(z):=f(az).
\]
If \(R_kf(z):=f(kz)\), then the BOCRS scaling law \cite[\S4]{BOCRS} gives
\[
L_{1,a}g=L_{1,a}R_af=R_aL_{a,1}f=\lambda g.
\]
Also \(g(0)=f(0)=1\), so \(g\) is a normalized eigenfunction of \(L_{1,a}\).

Therefore, by \cite[\S10.3]{BOCRS},
\[
g(z)g(-z)=\sum_{n=0}^{\infty}\widehat u_n(a,\lambda)z^{2n}.
\]
By \cite[Corollary~7.2]{BOCRS}, the function \(g\) is entire of exponential type at most
\(a\). Hence \(g(z)g(-z)\) is entire of exponential type at most \(2a\).

Since
\[
g(z)g(-z)=f(az)f(-az),
\]
the function \(g(z)g(-z)\) is a polynomial if and only if \(f(z)f(-z)\) is a polynomial.
By assumption, it is not a polynomial. Theorem~\ref{thm:rational-slice} applied to
\(g(z)g(-z)\) now shows that \(a^2\) and \(\lambda\) are not both rational. The final
assertion is immediate.
\end{proof}

We next specialize Corollary~\ref{cor:BOCRS-rational-slice} to the
H\"ormander--Bernhardsson branch. In the notation of \cite{BOCRS}, \(C\) denotes the sharp
point-evaluation constant and
\[
\varphi(z)=\Phi(z)\Phi(-z).
\]
The BOCRS rescaling identifies the corresponding normalized spectral pair as
\[
a_*=\frac{\pi}{4C},
\qquad
\lambda_*=-\frac{L_\tau(1)}{2C},
\]
so the rational-slice obstruction becomes a concrete arithmetic statement about the two
distinguished constants \(C\) and \(L_\tau(1)\).

\begin{corollary}\label{cor:HB-normalized-obstruction}
For the Hörmander--Bernhardsson branch, the two numbers
\[
\left(\frac{\pi}{4C}\right)^2
\qquad\text{and}\qquad
-\frac{L_{\tau}(1)}{2C}
\]
are not both rational. In particular, at least one of $C/\pi$ and $L_{\tau}(1)/C$ is irrational.
\end{corollary}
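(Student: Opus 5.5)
The plan is to apply Corollary~\ref{cor:BOCRS-rational-slice} at the normalized spectral pair $(a_*,\lambda_*)=\bigl(\pi/(4C),\,-L_\tau(1)/(2C)\bigr)$ of Remark~\ref{rem:HB-cf-relation}. The work reduces to three checks. First, $a_*>0$, which is clear since $C>0$. Second, the H\"ormander--Bernhardsson branch is a decaying BOCRS branch with this parameter and eigenvalue. Third, the associated even product is not a polynomial.

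For the second check I will take $f$ to be the normalized eigenfunction of $L_{a_*,1}$ attached to this branch. Remark~\ref{rem:HB-cf-relation} identifies it with the rescaled eigenfunction $h(z)=\Phi(z/(2C))$ at exactly $(a_*,\lambda_*)$, or with a fixed rescaling of $h$, via the BOCRS scaling law. Since $\Phi(0)=1$ (because $\varphi(0)=\Phi(0)^2=1$, with the normalization of \cite{BOCRS}), $f$ is correctly normalized. Then $f(z)f(-z)$ equals $\varphi$ up to a dilation of the variable:
\[
f(z)f(-z)=\Phi\!\Bigl(\tfrac{z}{2C}\Bigr)\Phi\!\Bigl(-\tfrac{z}{2C}\Bigr)=\varphi\!\Bigl(\tfrac{z}{2C}\Bigr),
\]
possibly composed with the further rescaling $z\mapsto a_*z$ that is implicit in passing between the $L_{a,1}$ and $L_{1,a}$ normalizations.

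The third check is the key nondegeneracy input: $\varphi$ is not a polynomial. Several standard facts each settle this. $\varphi$ has infinitely many zeros $\pm\tau_n$, which is why $L_\tau(1)$ is an infinite series. Alternatively, $\varphi\in L^1(\mathbb R)$ and $\varphi\not\equiv0$, while no nonzero polynomial is integrable on $\mathbb R$. The integrability argument is the cleanest, since it uses only the defining extremal property. Because dilations preserve polynomiality, $f(z)f(-z)$ is not a polynomial either.

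Corollary~\ref{cor:BOCRS-rational-slice} then says that $a_*^2=(\pi/(4C))^2$ and $\lambda_*=-L_\tau(1)/(2C)$ are not both rational. For the final sentence, suppose both $C/\pi$ and $L_\tau(1)/C$ were rational. Then $(\pi/(4C))^2=1/(16(C/\pi)^2)$ and $-L_\tau(1)/(2C)$ would both be rational, a contradiction.

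The main obstacle is the bookkeeping in the second check: confirming that the constants $\pi/(4C)$ and $-L_\tau(1)/(2C)$ are exactly the $(a,\lambda)$ of the \emph{decaying} branch in the normalization used by Corollary~\ref{cor:BOCRS-rational-slice}. I will take this from \cite{BOCRS} as recorded in Remark~\ref{rem:HB-cf-relation}. The rest of the argument is immediate.
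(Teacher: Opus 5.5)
Your proposal is correct and is essentially the paper's argument. The paper applies Theorem~\ref{thm:rational-slice} directly to $h(z)h(-z)=\varphi(z/(2C))$, having shown from BOCRS Theorem~1.1 and the scaling law that $h$ is the normalized $\lambda_*$-eigenfunction of $L_{1,a_*}$; you route the same computation through Corollary~\ref{cor:BOCRS-rational-slice}, and your integrability argument ($0\ne\varphi\in L^1(\R)$) for non-polynomiality works as well as the paper's infinitely-many-zeros argument. One small bookkeeping correction, harmless since only non-polynomiality is used: the $L_{a_*,1}$-eigenfunction is $f(z)=h(z/a_*)=\Phi(2z/\pi)$, so $f(z)f(-z)=\varphi(2z/\pi)$, not a further dilation $z\mapsto a_*z$ of $\varphi(z/(2C))$.
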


\begin{proof}
By \cite[Theorem~1.1]{BOCRS}, the function \(\Phi\) is the normalized eigenfunction of
\[
L_{1/(2C),\,\pi/2}
\]
with eigenvalue
\[
\lambda_*:=-\frac{L_{\tau}(1)}{2C}.
\]
Set
\[
a_*:=\frac{\pi}{4C},
\qquad
h(z):=\Phi\!\left(\frac{z}{2C}\right).
\]
If \(R_kf(z):=f(kz)\), then \cite[\S4]{BOCRS} gives the scaling law
\[
L_{a,b}R_k=R_kL_{ka,k^{-1}b}.
\]
Applying this with \(a=1\), \(b=a_*\), and \(k=\frac{1}{2C}\), we obtain
\[
L_{1,a_*}h
=
L_{1,a_*}R_{1/(2C)}\Phi
=
R_{1/(2C)}L_{1/(2C),\,\pi/2}\Phi
=
\lambda_* h.
\]
Moreover \(h(0)=\Phi(0)=1\). Hence \(h\) is a normalized eigenfunction of \(L_{1,a_*}\), and
therefore
\[
h(z)h(-z)=\sum_{n=0}^{\infty}\widehat u_n(a_*,\lambda_*)z^{2n}.
\]

Since
\[
h(z)h(-z)=\varphi\!\left(\frac{z}{2C}\right),
\]
and \(\varphi\) has infinitely many zeros \(\pm\tau_n\), the function \(h(z)h(-z)\) is not a
polynomial. By \cite[Corollary~7.2]{BOCRS}, \(h\) is entire of exponential type at most
\(a_*\), so \(h(z)h(-z)\) is entire of exponential type at most \(2a_*\). Therefore
Theorem~\ref{thm:rational-slice} implies that \(a_*^2\) and \(\lambda_*\) are not both rational.

If both \(C/\pi\) and \(L_{\tau}(1)/C\) were rational, then
\[
a_*=\frac{\pi}{4C}=\frac{1}{4(C/\pi)}\in\Q
\qquad\text{and}\qquad
\lambda_*=-\frac{1}{2}\frac{L_{\tau}(1)}{C}\in\Q,
\]
which is impossible. Therefore at least one of \(C/\pi\) and \(L_{\tau}(1)/C\) is irrational.
\end{proof}

\begin{remark}\label{rem:not-Ltau}
Corollary~\ref{cor:HB-normalized-obstruction} does not imply that \(L_{\tau}(1)\) itself is
irrational.  Indeed, for the H\"ormander--Bernhardsson branch the BOCRS rescaling gives
\[
a_*=\frac{\pi}{4C},
\qquad
\lambda_*=-\frac{L_{\tau}(1)}{2C}.
\]
Thus a hypothesis such as \(L_{\tau}(1)\in\Q\) places the spectral pair only on the line
\[
\lambda=-\frac{2L_{\tau}(1)}{\pi}\,a,
\]
and does \emph{not} imply that either \(a_*^2\) or \(\lambda_*\) is rational.  The obstruction
from Corollary~\ref{cor:HB-normalized-obstruction} therefore controls only the normalized pair
\[
\left(\frac{\pi}{4C}\right)^2,
\qquad
-\frac{L_{\tau}(1)}{2C},
\]
not the number \(L_{\tau}(1)\) by itself.
\end{remark}

\begin{remark}\label{rem:Lplusminus2}
Bondarenko--Ortega-Cerd\`a--Radchenko--Seip formulate in \cite[\S10]{BOCRS} two arithmetic questions: Conjecture~1, namely
\[
L_{+}(-2k)=\frac{L_{+}(2k)}{(2\pi iC)^{2k}},
\qquad k\in\Z,
\]
and Conjecture~2, namely the integrality proved in the present paper. The author proved Conjecture~1 in \cite[Theorem~1.2]{NguyenDangHB}. Combining his theorem with the BOCRS identities
\[
L_{+}(2)=-4C\,L_{\tau}(1),
\qquad
\pi^2 a_1=-\frac{L_{\tau}(1)}{C},
\]
here \(a_1\) denotes the coefficient \(a_1\) in the BOCRS odd inverse-power expansion of
the positive zeros \((\tau_n)\); see \cite[(1.7)]{BOCRS}. One obtains
\[
L_{+}(-2)
=
\frac{L_{+}(2)}{(2\pi iC)^2}
=
\frac{L_{\tau}(1)}{\pi^2C}
=
-a_1.
\]
\end{remark}

\section{Weighted arithmetic and the coefficient ring of \texorpdfstring{$\varphi$}{phi}}
\label{sec:weighted-arithmetic}

The integrality theorem admits a sharper refinement: not only are the coefficients integral,
their monomial support is confined to a natural weighted region. This reflects the structure of
the recurrence itself: multiplication by \(\lambda\) raises the relevant weight by \(1\),
whereas multiplication by \(x\) raises it by \(2\) but comes from the lower-order term
\(\widetilde u_{n-1}\). The resulting weighted filtration gives the following precise support
bound.

\begin{proposition}\label{prop:weighted-support}
Define a weight on monomials by
\[
w(x)=2,
\qquad
w(\lambda)=1,
\qquad
w(x^r\lambda^s)=2r+s.
\]
For a nonzero polynomial
\[
F(x,\lambda)=\sum_{r,s} c_{r,s}x^r\lambda^s
\]
set
\[
\deg_w F:=\max\{2r+s:c_{r,s}\ne 0\},
\]
and \(\deg_w 0:=-\infty\). Then for every \(n\ge 0\),
\[
\deg_w\widetilde u_n\le n.
\]
Equivalently, every monomial \(x^r\lambda^s\) occurring in \(\widetilde u_n(x,\lambda)\)
satisfies
\[
2r+s\le n.
\]
\end{proposition}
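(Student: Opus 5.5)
The plan is a strong induction on \(n\) using the recurrence \eqref{eq:utilde-recurrence} directly in \(\Q[x,\lambda]\); integrality plays no role here. Weighted degree behaves well under the operations involved. For nonzero \(F,G\) one has
\[
\deg_w(F+G)\le\max(\deg_w F,\deg_w G),
\qquad
\deg_w(FG)\le \deg_w F+\deg_w G,
\]
and multiplying by a nonzero rational constant does not change \(\deg_w\). With the convention \(\deg_w 0=-\infty\), these bounds hold trivially when a factor vanishes.

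The base cases are \(\widetilde u_{-1}=0\), which has weighted degree \(-\infty\), and \(\widetilde u_0=1\), which has weighted degree \(0\). The listed values \(\widetilde u_1=-2\lambda\) and \(\widetilde u_2=6\lambda^2-12\lambda+2x\) give weighted degrees \(1\) and \(2\), which serves as a consistency check.

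For the inductive step, assume \(\deg_w\widetilde u_k\le k\) for all \(k\le n\), with \(n\ge 0\). The first term of the recurrence is
\[
\frac{4n+2}{n+1}\bigl(n(n+1)-\lambda\bigr)\widetilde u_n .
\]
Here \(n(n+1)-\lambda\) has weighted degree at most \(1\), so this term has weighted degree at most \(1+n\). The second term is
\[
\frac{4n}{n+1}\,x\,\widetilde u_{n-1},
\]
where \(x\) contributes weight \(2\), so this term has weighted degree at most \(2+(n-1)=n+1\). When \(n=0\) the second term vanishes, since \(\widetilde u_{-1}=0\) and its coefficient is \(0\) anyway. Taking the maximum of the two bounds gives \(\deg_w\widetilde u_{n+1}\le n+1\), which closes the induction.

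The only point needing care is the bookkeeping with \(-\infty\) and the zero polynomial. I expect no real obstacle. The equivalent monomial formulation follows at once from the definition of \(\deg_w\) as a maximum over the support.
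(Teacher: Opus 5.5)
Your proof is correct and follows the paper's argument: an induction on \(n\) through the recurrence, bounding the weight of \((n(n+1)-\lambda)\widetilde u_n\) by \(n+1\) and that of \(x\,\widetilde u_{n-1}\) by \((n-1)+2=n+1\). The only cosmetic difference is your base cases, \(\widetilde u_{-1}=0\) and \(\widetilde u_0=1\) with the \(-\infty\) convention, where the paper uses \(n=0,1\).
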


\begin{proof}
We argue by induction on \(n\). For \(n=0\), we have \(\widetilde u_0=1\), so
\(\deg_w\widetilde u_0=0\). For \(n=1\), the recurrence \eqref{eq:utilde-recurrence} at \(n=0\)
gives \(\widetilde u_1=-2\lambda\), so \(\deg_w\widetilde u_1=1\).

Now assume
\[
\deg_w\widetilde u_n\le n,
\qquad
\deg_w\widetilde u_{n-1}\le n-1
\]
for some \(n\ge 1\). From \eqref{eq:utilde-recurrence},
\[
\widetilde u_{n+1}
=
\frac{4n+2}{n+1}\bigl(n(n+1)-\lambda\bigr)\widetilde u_n
+\frac{4n}{n+1}x\,\widetilde u_{n-1}.
\]
Multiplication by the constant \(n(n+1)\) does not change the weight, multiplication by
\(\lambda\) increases the weight by \(1\), and multiplication by \(x\) increases the weight by \(2\).
Therefore
\[
\deg_w\bigl((n(n+1)-\lambda)\widetilde u_n\bigr)\le n+1,
\qquad
\deg_w\bigl(x\,\widetilde u_{n-1}\bigr)\le (n-1)+2=n+1.
\]
Hence \(\deg_w\widetilde u_{n+1}\le n+1\), completing the induction.
\end{proof}

This weighted support bound has a concrete consequence for the Taylor coefficients of the
H\"ormander--Bernhardsson extremal function. After inserting the normalized parameters
\[
a_*^2=\frac{\pi^2}{16C^2},
\qquad
\lambda_*=-\frac{L_\tau(1)}{2C},
\]
the inequality \(2r+s\le n\) is exactly what rules out odd powers of \(\pi\) and forces an
additional factor of \(C^n\). In this way, the abstract weight bound becomes an explicit
coefficient-ring statement for \(\varphi\).

\begin{corollary}\label{cor:HB-coefficient-ring}
Let
\[
a_*:=\frac{\pi}{4C},
\qquad
\lambda_*:=-\frac{L_\tau(1)}{2C},
\]
and write
\[
\varphi(z)=\sum_{n=0}^{\infty} c_n z^{2n}.
\]
Then
\begin{equation}\label{eq:cn-uhat}
c_n=(2C)^{2n}\widehat u_n(a_*,\lambda_*)
\qquad(n\ge 0).
\end{equation}
Consequently,
\[
c_n\in \Z[\pi^2,C,L_\tau(1)]
\qquad(n\ge 0).
\]
In fact, one has the stronger divisibility statement
\[
c_n\in C^n\,\Z[\pi^2,C,L_\tau(1)]
\qquad(n\ge 0).
\]
\end{corollary}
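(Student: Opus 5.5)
The plan is to establish \eqref{eq:cn-uhat} first by rescaling, and then to read off both ring statements from Theorem~\ref{thm:main} and Proposition~\ref{prop:weighted-support}.

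\emph{Step 1: the rescaling identity.} As in the proof of Corollary~\ref{cor:HB-normalized-obstruction}, set \(h(z):=\Phi(z/(2C))\). That proof shows that \(h\) is the normalized \(\lambda_*\)-eigenfunction of \(L_{1,a_*}\). By the BOCRS coefficient identity \cite[\S10.3]{BOCRS} we then have
\[
h(z)h(-z)=\sum_{n\ge0}\widehat u_n(a_*,\lambda_*)z^{2n}.
\]
On the other hand, \(h(z)h(-z)=\varphi(z/(2C))=\sum_n c_n (2C)^{-2n}z^{2n}\). Both sides are entire, so comparing Taylor coefficients gives \(c_n=(2C)^{2n}\widehat u_n(a_*,\lambda_*)\), which is \eqref{eq:cn-uhat}.

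\emph{Step 2: expanding in monomials.} By Theorem~\ref{thm:main} we may write \(\widetilde u_n(x,\lambda)=\sum_{r,s}e_{r,s}x^r\lambda^s\) with \(e_{r,s}\in\Z\). Proposition~\ref{prop:weighted-support} says that only monomials with \(2r+s\le n\) occur. Substituting \(x=a_*^2=\pi^2/(16C^2)\) and \(\lambda=\lambda_*=-L_\tau(1)/(2C)\), a single monomial contributes
\[
(2C)^{2n}e_{r,s}\frac{\pi^{2r}}{16^rC^{2r}}\cdot\frac{(-1)^sL_\tau(1)^s}{2^sC^s}
=e_{r,s}(-1)^s\,2^{2n-4r-s}\,C^{2n-2r-s}\,\pi^{2r}L_\tau(1)^s.
\]

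\emph{Step 3: checking exponents.} The only thing to verify is that both exponents are nonnegative.
\begin{itemize}
\item For the power of \(2\): the condition \(2r+s\le n\) gives \(4r+s\le 2(2r+s)\le 2n\), so \(2n-4r-s\ge0\).
\item For the power of \(C\): we have \(2n-2r-s=n+(n-2r-s)\ge n\).
\end{itemize}
Hence every term lies in \(C^n\Z[\pi^2,C,L_\tau(1)]\), and so does their sum \(c_n\). This gives the stronger divisibility statement, and the weaker claim \(c_n\in\Z[\pi^2,C,L_\tau(1)]\) follows from it.

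The main obstacle is only the bookkeeping that the powers of \(2\) coming from \(16^{-r}2^{-s}\) are absorbed by \(2^{2n}\). This works precisely because of the weight bound \(2r+s\le n\); the cruder degree bounds \eqref{eq:uhat-degree-bounds} would not suffice. The analytic input in Step~1 is already available from the cited BOCRS results, so no further argument is needed there.
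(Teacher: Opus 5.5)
Your proposal is correct and follows the paper's proof essentially step for step. You use the rescaling $h(z)=\Phi(z/(2C))$ to get \eqref{eq:cn-uhat}, then the monomial expansion with the weighted bound $2r+s\le n$ from Proposition~\ref{prop:weighted-support} to control the exponents of $2$ and $C$. Your inequality $4r+s\le 2(2r+s)\le 2n$ is just a rewriting of the paper's $(n-2r-s)+(n-2r)\ge 0$.
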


\begin{proof}
As in the proof of Corollary~\ref{cor:HB-normalized-obstruction}, set
\[
h(z):=\Phi\!\left(\frac{z}{2C}\right).
\]
Then \(h\) is a normalized eigenfunction of \(L_{1,a_*}\) with eigenvalue \(\lambda_*\), where
\[
a_*=\frac{\pi}{4C},
\qquad
\lambda_*=-\frac{L_\tau(1)}{2C}.
\]
Hence, by \cite[\S10.3]{BOCRS},
\[
h(z)h(-z)=\sum_{n=0}^{\infty}\widehat u_n(a_*,\lambda_*)z^{2n}.
\]
Since
\[
h(z)h(-z)=\varphi\!\left(\frac{z}{2C}\right)
=
\sum_{n=0}^{\infty}c_n\left(\frac{z}{2C}\right)^{2n},
\]
comparing coefficients gives
\[
c_n=(2C)^{2n}\widehat u_n(a_*,\lambda_*)
\qquad(n\ge0).
\]

Now write
\[
\widetilde u_n(x,\lambda)=\sum_{r,s} a_{r,s}x^r\lambda^s
\qquad(a_{r,s}\in \Z).
\]
By Proposition~\ref{prop:weighted-support}, only pairs \((r,s)\) with \(2r+s\le n\) occur.
Using
\[
a_*^2=\frac{\pi^2}{16C^2},
\qquad
\lambda_*=-\frac{L_\tau(1)}{2C},
\]
a monomial \(x^r\lambda^s\) contributes to
\(c_n=(2C)^{2n}\widetilde u_n(a_*^2,\lambda_*)\)
the term
\[
(-1)^s\,2^{\,2n-4r-s}\,
\pi^{2r}\,
C^{\,2n-2r-s}\,
L_\tau(1)^s.
\]
Because \(2r+s\le n\), we have
\[
2n-4r-s=(n-2r-s)+(n-2r)\ge0,
\qquad
2n-2r-s=n+(n-2r-s)\ge n.
\]
Hence every such term belongs to
\[
C^n\Z[\pi^2,C,L_\tau(1)].
\]
Summing over all \((r,s)\), we obtain
\[
c_n\in C^n\,\Z[\pi^2,C,L_\tau(1)]
\subset \Z[\pi^2,C,L_\tau(1)].
\qedhere
\]
\end{proof}

\begin{remark}\label{rem:HB-pi2-sharpening}
BOCRS observe in \cite[\S10.3]{BOCRS} that the Taylor coefficients of \(\varphi\) appear to
lie in \(\Z[\pi,C,L_\tau(1)]\). Corollary \ref{cor:HB-coefficient-ring} sharpens this:
only even powers of \(\pi\) can occur.
\end{remark}

\end{document}